\DeclareMathOperator{\dive}{div}
\DeclareMathOperator{\dist}{dist}
\def\eps{{\varepsilon}}
\def\N{\mathbb{N}}
\def\R{\mathbb{R}}
\def\O{\Omega}
\def\Ob{\overline{\Omega}}
\def\A{\mathcal{A}}
\def\F{\mathcal{F}}
\def\HH{\mathcal{H}}
\newcommand{\be}{\begin{equation}}
\newcommand{\ee}{\end{equation}}
\numberwithin{equation}{section}
\theoremstyle{plain}
\newtheorem{theo}{Theorem}[section]
\newtheorem{lemm}[theo]{Lemma}
\theoremstyle{remark}
\newtheorem{rema}[theo]{Remark}
\title{A non-isotropic free transmission problem governed by quasi-linear operators}
\author{Harish Shrivastava}
\address{Departamento de Matemática, Universidade Federal da Paraíba(Jo\~ao Pessoa, Brazil)}
\email{h.shrivastava@studenti.unipi.it}
\begin{document}

\begin{abstract}
We study a free transmission problem in which solution minimizes a functional with different definitions in positive and negative phases. We prove some asymptotic regularity results when the jumps of the diffusion coefficients get smaller along the free boundary. At last, we prove a measure-theoretic result related to the free boundary.
\end{abstract}
\medskip

\maketitle

\textbf{Keywords:} variational calculus, transmission problems, free boundary, finite perimeter.

\textbf{2010 Mathematics Subjects Classification:}49J05, 35B65, 35Q92, 35Q35

\tableofcontents

\section{Introduction}\label{introduction}

In this article we intend to study the regularity issues related to the transmission problems. In various applied sciences, many phenomenas are modelled by transmission problem also known as phase transition problems. These kind of models naturally appear when we study the diffusion of a quantity through different media. For example, modelling a composite material having different diffusion properties: like combination of ice and water, or mixture of chemicals, a tumour in some tissue, or heat conduction through different regions. 

Very broadly speaking, variational formulation of a transmission problem is of the following form
$$
\int_{\O}a_+(x,v,\nabla v)\chi_{\{v>0\}}+a_-(x,v,\nabla v)\chi_{\{v\le 0\}}\,dx \;\to \,\min
$$ 
for an appropriate domain $\O\subset \R^N$, $a_+$ and $a_-$ determine diffusion in positive and negative phases. Candidates $v$ are from appropriate function space. The ice-water example is the most relatable because the (solid) ice part corresponds to the negative phase and (liquid) water part corresponds to the positive phase.

The mathematical analysis of transmission problems involves discontinuous coefficients, due to the difference in the properties of different media. Let us focus on the stationary state of the ice-water combination and study the diffuson of heat (related to the temperature $T$ ) $T:\O\to \R^N$, $\O$ being the domain under study. We can say that in ice the diffusion is determined by an operator corresponding to solid state of water that is
$$
-\dive(a_-(x)\nabla T)=0\qquad\mbox {in ice, $\{T<0\}$} 
$$
and in water, the diffusion is determined according to an operator corresponging to liquid state
$$
-\dive((a_+(x)\nabla T)=0\qquad\mbox{in water, $\{T\ge0\}$}.
$$
As a comination of the above two PDEs we can write 
$$
-\dive(a(x)\nabla T)=0\qquad\mbox{in $\O$}
$$
with $a(x,T)=a_+(x)\chi_{\{T>0\}}+a_-(x)\chi_{\{T\le0\}}$. With $a$ being a discontinuous function along the free boundary of $T$. An important point to be noticed is that the diffusion tend to compensate the transition of phases, which gets reflected in the free boundary condition. It can be formally written as follows, supposing that the free boundary is sufficiently regular
$$
\mathcal {G}(\partial _{\nu_+}T,\partial _{\nu_-}T)=0\qquad\mbox{on $\partial \{T>0\}$}
$$
for some function $\mathcal G$. Above mentioned PDEs and the free boundary condition can be posed in the following variational setup, which was studied in \cite{TA15} 
\be\label{TA15}
\int_{\O}\langle A(x,v)\nabla v ,\nabla v \rangle-f(x,v)v+\gamma(x,v)\,dx 
\ee
with 
\[
\begin{split}
A(x,v)=A_+(x)\chi_{\{v>0\}}+A_-(x)\chi_{\{v\le 0\}}\\
f(x,v)=f_+(x)\chi_{\{v>0\}}+f_-(x)\chi_{\{v\le 0\}}\\
\gamma(x,v)=\gamma_+(x)\chi_{\{v>0\}}+\gamma_-(x)\chi_{\{v\le 0\}}
\end{split}
\]

and matrices $A_{\pm}$ satisfying the ellipticity condition for any $\xi\in \R^N$
$$
\lambda |\xi|^2 \le \langle A_{\pm}\xi,\xi \rangle \le \Lambda |\xi|^2,
$$
$f_{\pm}\in L^N(\O)$, $\gamma_{\pm}\in C(\Ob)$. 

One important point to note is that the functionals involved in the phase transition problems are not convex, hence the existence result does not follow from classical methods. The approach involve some tools from measure theory and also variational calculus (see \cite{LTQ15}, \cite{JDS18}, \cite{TA15}).

We remark that the addition of the last term $\gamma(x,u)$ (commonly called the compensation term) penalizes the change of phases, which in turn imposes some regularity on the free boundary. The role of this term is very evident in the Section \ref{section5} where we prove that the last term forces the free boundary to be a rectifiable set of finite $\mathcal H^{N-1}$ measure. The technique to show rectifiability of the free boundary is adapted from \cite{db12}, also see \cite{ButHar18} for an application of the same technique in shape optimization. We expect the free boundary to be even more regular, and the compensation term should play an important role in it.

In \cite{TA15} authors have shown that as the discontinuity of the diffusion coefficients gets smaller the solution $u$ of \eqref{TA15} gets more regular, tending to Lipschitz regularity. We can imagine it as studying the behaviour of diffusion when the material becomes more and more homogenized with time.

In this article, we have considered a functional corresponding to a quasi-linear operator in respective phases (see \eqref{P}). The problem of phase transitions can be seen as a generalization of the free boundary problems studied by Alt, Caffarelli and Friedman (see \cite{altcaf81} for one phase problem and \cite{ACF84} for two phase model). One can see results in \cite {ACF84} as a particular case of the functional \eqref{TA15} with $A_+=A_-=Id$ and $f_+=f_-=0$. 

In fact, we can see the variational problem dealt in this article as a combination of the problems which fall into into two broad categories, Bernoulli type free boundary problems (with the source term $f=0$, see \cite{altcaf81}, \cite{ACF84} for the linear case, and \cite{dp05} for a non-linear one phase problem) and obstacle type problem (with the compensation term $\gamma=0$ see \cite{FKR17}, \cite{FGS17} ). Very roughly speaking, the minimizers of Bernoulli type functionals are less regular (at most Lipschitz continuous) while solutions of an obstacle problem can carry up to $C^{1,1}$ regularity. Since we are studying a mixture of both of the above mentioned problems, it is reasonable to think that one should not expect a minimizer to be more than Lipschitz continuous. The observations made in \cite{TA15}, \cite{LTQ15} indicate the same. One can refer to \cite{KLS17} where Alt-Caffarelli-Friedman monotonicity formula with two different operators is established and used to show the Lipschitz continuity for solution of PDEs with jump discontinuity in the operator.

Another prominent work related to free transmission problems can be found in \cite{JDS18}, dealing with the functionals of the form \eqref{JDS18} such that a solution satisfy PDEs with different non-linearities in different phases.
\be\label{JDS18}
\int_{\O\cap \{u>0\}}|\nabla u|^p-f_+(x)u+\gamma_+(x)\,dx+\int_{\O\cap \{u\le 0\}}|\nabla u|^q-f_-(x)u+\gamma_-(x)\,dx.
\ee
Also refer to \cite{LTQ15} where the above functional with $p=q$ is studied, proving that the solutions are locally log-Lipschitz in the domain. The functional \eqref{JDS18} is also studied in \cite{AF94}, assuming the free boundary is a fixed surface with Lipschitz regularity.

We present the mathematical setup which we will be working on in this article. $\O \subset \R^N$ is open, smooth and bounded. $N\ge 3$, $\A_+, \A_- \in L^{\infty} (\O)$ and satisfy the following boundedness condition
\be\label{ellipticity}
\lambda  \le  \A_{\pm}(x) \le \Lambda 
\ee 
for almost every $x\in \O$, $0<\lambda\le\Lambda<\infty$ are fixed constants. $\gamma_{\pm}$ are continuous and integrable real valued functions  on $\O$, $p\in [2,N)$ is fixed. $f_{\pm}\in L^q(\O)$ for $q>\frac{N}{p}$. 

We consider a functional of the form
\be\tag{$\mathcal P$}\label{P}
\F_{\A,f,\gamma}(v;\O )=\int_{\O} \A (x,v)  |\nabla v|^{p}  -f(x,v)v+\gamma(x,v)\,dx
\ee
where the integrand is defined as 
\[
\begin{split}
\A (x,s)&= \A_+(x)\chi_{\{s>0\}}+\A_-(x)\chi_{\{s\le 0\}}\\
f(x,s)&= f_+(x)\chi_{\{s>0\}}+f_-(x)\chi_{\{s\le 0\}}\\
\gamma(x,s)&= \gamma_+(x)\chi_{\{s>0\}}+\gamma_-(x)\chi_{\{s\le 0\}}.
\end{split}
\]
For candidate functions in the search of a minimizer, we consider the following Sobolev space with fixed boundary data $\phi\in W^{1,p}(\O)$
$$
W^{1,p}_{\phi}(\O)=\left \{ v\in W^{1,p}(\O) \lvert v-\phi\in W_0^{1,p}(\O) \right \}.
$$

In the absence of any ambiguity, we will denote $\F_{\A,f,\gamma}$ solely as $\F$, and we will mention only the subscripts which carry a risk of being ambiguous. Note that any quantity depending solely on $\A,f,\gamma,\phi,\O$ will be referred to as quantity depending on data of the problem.

As mentioned earlier, the functional $\F$ in \eqref{P} is not convex in $W^{1,p}(\O)$, we will prove that $\F$ is lower semi-continuous with respect to $v$ in $W^{1,p}(\O)$ topology via techniques from measure theory (see Theorem \ref{existence}). In the Theorem \ref{holder}, we use the results from the theory developed by Giaquinta-Giusti (see \cite{gigi84}, \cite{eg05}) which is related to quasi minima of a functional and conclude local boundedness and existence of a universal modulus of continuity for all the minimizers of $\F$. Method used in the Theorem \ref {holder} differ from the one used in \cite{TA15}, we believe Giaquinta-Giusti's arguments used in this article can be applied to more general classes of transmission problems. 

The universal modulus of continuity plays a decisive role in giving compactness arguments while studying the asymptotic regularity of solutions in the Section \ref{section3} and \ref{section4}. 

In the Section \ref{section3} we will show that if $\A_{+}=\A_-=\A\in C(\O)$ and $f_{\pm}\in L^{N}(\O)$, then $u\in C^{0,1^-}_{loc}(\O)$, using tangential analysis method.  Main idea is to study regularity of solution of $\F$ with the coefficients $\A_{\pm}$ such that $\F$ is close to a given tangential free boundary problem. The arguments in the Section \ref{section3} and \ref{section4} can also be posed in terms of $\Gamma$-convergence (see \cite{dg68}, \cite{ab02}, \cite{eg05} for a comprehensive introduction to the subject), but we have refrained from using this term in the proofs. In the Section \ref{section4}, we use analogous arguments as in the Section \ref{section3} to show that as the jumps between $\A_{\pm}\in C(\O)$ gets smaller, solutions tend to be more regular, asymptotically tending to Lipschitz regularity.

In the last section, Section \ref{section5}, we prove that the free boundary $\partial ^*\{u>0\}$ of a minimizer $u$ of $\F$ in \eqref{P} is always a set of finite perimeter under the assumption of Dirichlet boundary condition i.e. $\phi=0$ and $(\gamma_+-\gamma_-)>c>0$. We can also prove similar result for a general boundary condition, but in order to avoid tedious calculations which will digress the reader from the main idea of the proof, we have chosen to provide the proof for only Dirichlet boundary, and key steps for the general case is mentioned in the Remark \ref{general boundary}.

We also remark that the assumption on the ordering on $\gamma_{\pm}$ i.e. $\gamma_+>\gamma_-$ can be dropped and replaced with $\gamma(x,s)>0$ for $s\neq 0$, $\gamma(x,0)=0$. In this case we can prove rectifiability (finite perimeter) of  a larger set $\partial ^*\{|u|>0\}$

The proof in the last section involves techniques from geometric measure theory, we refer the reader to \cite{fmgmt}, \cite{HF69}, \cite{AFP00},  \cite{PK08}, \cite{GE15} for definitions and preliminary results used to prove the Theorem \ref{finite perimeter}.

\section{Existence and minimal H\"older regularity}
We combine the methods in calculus of variations and measure theory to show the existence of a minimizer, note that the functional is not convex (see the discussion in \cite{TA15} for a counter example).  An approach similar to ours can be found in \cite{LTQ15}.

\begin{theo}\label{existence}
Given a boundary data $\phi\in W^{1,p}(\O)$, there exists a minimizer $u \in W^{1,p}_{\phi}(\O)$ of $\F$ in the problem \eqref{P} .
\end{theo}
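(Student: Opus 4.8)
The plan is to use the direct method of the calculus of variations, adapted to handle the non-convexity. The functional $\F$ is not convex (hence not weakly lower semicontinuous by the usual convexity argument), so the core difficulty is establishing lower semicontinuity along a minimizing sequence despite the discontinuous, phase-dependent coefficients $\A(x,v)$, $f(x,v)$, $\gamma(x,v)$. The overall skeleton remains standard: show the functional is bounded below and coercive on $W^{1,p}_\phi(\O)$, extract a weakly convergent minimizing sequence, and pass to the limit.

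Let me work through the steps.

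First, I would establish **coercivity and a lower bound**. Using the lower ellipticity bound $\A_\pm \ge \lambda$, the dominant term satisfies $\int_\O \A(x,v)|\nabla v|^p \ge \lambda \int_\O |\nabla v|^p$. The term $-\int_\O f(x,v)v$ must be controlled: since $f_\pm \in L^q(\O)$ with $q > N/p$, Hölder's inequality together with the Sobolev embedding $W^{1,p}_\phi(\O) \hookrightarrow L^{p^*}(\O)$ lets me bound $\left|\int_\O f v\,dx\right|$ by a sublinear-in-$\|\nabla v\|_{L^p}$ expression (the exponent condition on $q$ is exactly what makes the $fv$ term lower order relative to the gradient term), and $\gamma_\pm$ being integrable gives a bounded contribution. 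Hence $\F(v;\O) \ge \tfrac{\lambda}{2}\|\nabla v\|_{L^p}^p - C$, so $\F$ is coercive on $W^{1,p}_\phi(\O)$ and $m := \inf \F > -\infty$.

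Next, take a minimizing sequence $u_k$ with $\F(u_k;\O) \to m$. Coercivity gives a uniform bound on $\|u_k\|_{W^{1,p}(\O)}$, so after passing to a subsequence $u_k \rightharpoonup u$ weakly in $W^{1,p}(\O)$ and strongly in $L^p(\O)$ (Rellich–Kondrachov); since $W^{1,p}_\phi(\O)$ is weakly closed, $u \in W^{1,p}_\phi(\O)$. Strong $L^p$ convergence yields a further subsequence converging pointwise a.e., which is the measure-theoretic ingredient needed to handle the characteristic functions. **The main obstacle is the lower semicontinuity of the leading term**, because the weight $\A(x,u_k)$ switches between $\A_+$ and $\A_-$ depending on the sign of $u_k$, and the sets $\{u_k > 0\}$ need not converge to $\{u > 0\}$ in any naive sense. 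My plan here is to exploit Fatou-type arguments: pointwise a.e.\ convergence gives $\chi_{\{u_k>0\}} \to \chi_{\{u>0\}}$ a.e.\ on $\{u \neq 0\}$, and on $\{u = 0\}$ one shows the contribution is controlled since $\nabla u = 0$ a.e.\ there. I would write the leading term as $\int_\O \A_+(x)\chi_{\{u_k>0\}}|\nabla u_k|^p + \A_-(x)\chi_{\{u_k\le 0\}}|\nabla u_k|^p$ and combine weak lower semicontinuity of each convex piece $v \mapsto \int_\O \A_\pm(x)|\nabla v|^p$ with the a.e.\ convergence of the characteristic weights, using a Fatou/Ioffe-type lower semicontinuity theorem for integrands that are convex in the gradient and merely measurable in $(x, \text{sign})$.

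Finally, for the lower-order terms: the $\gamma$-term is handled by dominated/bounded convergence once $\chi_{\{u_k > 0\}} \to \chi_{\{u > 0\}}$ a.e., and the $f v$ term converges by combining strong $L^{p}$ convergence of $u_k$ (even $L^{p^*}$ up to a lower exponent) with the a.e.\ convergence of the sign, again via dominated convergence using the $L^q$ integrability of $f_\pm$. Assembling these, $\F(u;\O) \le \liminf_k \F(u_k;\O) = m$, which forces $\F(u;\O) = m$ and identifies $u$ as a minimizer. I expect the genuinely delicate point to be rigorously justifying the interaction between the weak convergence of $\nabla u_k$ and the a.e.\ convergence of $\chi_{\{u_k>0\}}$ in the leading term; everything else is a routine application of the direct method.
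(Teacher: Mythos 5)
Your proposal is correct and follows essentially the same route as the paper: the direct method, with coercivity coming from the ellipticity bound \eqref{ellipticity} together with the subcritical integrability of $f_\pm$, and with the lower semicontinuity of the leading term obtained by splitting into the two phases, exploiting pointwise a.e.\ convergence of the signs on $\{u\neq 0\}$ and the fact that $\nabla u=0$ a.e.\ on $\{u=0\}$. The only real difference is in how that semicontinuity step is implemented: where you invoke a Fatou/Ioffe-type theorem for integrands convex in the gradient, the paper carries out the same step by hand via Egorov's theorem and the nested level sets $\{u_0>\delta\}$, letting $\delta\to0$ and the exceptional set shrink --- the two mechanisms are interchangeable here.
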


\begin{proof}
Since $f\in L^q(\O)$, $q>\frac{N}{p}$ and $\A_{\pm}$ satisfy the boundedness condition \eqref{ellipticity} we have
\be\label{1.1}
\F(v)\ge \int_{\O}\lambda|\nabla v|^p-f(x,v)v+\gamma(x,v)\,dx
\ee
and since $p<N$ and $f\in L^q(\O)$ for $q>\frac{N}{p}>{p^*}'$, therefore by H\"older inequality and Poincaré inequality, we have
\be\label{1.2}
\int_{\O}f(x,v)v\,dx\le C(N)\|f\|_{L^{{p^*}'}(\O)}\|v\|_{L^{p^*}(\O)}\le C(N,q) \|f\|_{L^q(\O)}(\|\nabla v\|_{L^p(\O)} +C(\phi))
\ee
since $\gamma_{\pm}(x)$ are integrable in $\O$, the last term $\int_{\O}\gamma(x,v)\,dx$ is bounded. Combining this fact with \eqref{1.1} and \eqref{1.2}, we have
\be\label{1.3}
\F(v)\ge \lambda \|\nabla v\|_{L^p(\O)}^p -C(N,q)\|f\|_{L^q}\left (\|\nabla v\|_{L^p(\O)}+C(\phi) \right )+C(\gamma)>-\infty
\ee
for all $v\in W^{1,p}_{\phi}(\O)$. Thus, we establish existence of a lower bound for the functional $\F$. As there exists a minimum value, let $\{u_n\}$ be a minimizing sequence in $W^{1,p}_{\phi}(\O)$, by standard arguments (use Poincar\'e inequality on $\{(u_n-\phi)\}$, $u_n\in W^{1,p}_{\phi}(\O)$ ), we can show that 
$$
\sup_{n\in \N}\F(u_n)<\infty \Rightarrow \sup_{n\in \N}\|u_n\|_{W^{1,p}(\O)}<\infty.
$$
Hence $u_n$ is a bounded sequence in $W^{1,p}(\O)$ norm, by reflexivity of $W^{1,p}(\O)$, the sequence $u_n$ has a weak limit upto a subsequence. 

Since $\O$ is a bounded set therefore by Rellich theorem (see \cite{evans}, \cite{HB10}), $W^{1,p}(\O)$ embeds compactly into $L^p(\O)$. Therefore, there exists a function $u_0\in W_{\phi}^{1,p}(\O)$ such that $\nabla u_n \rightharpoonup \nabla u_0$ in $L^p(\O)$ and $u_n\to u_0$ in $L^p(\O)$ upto a subsequence. Moreover, we know that $u_n\to u_0$ pointwise almost everywhere in $\O$ upto another subsequence. By Egorov's theorem (\cite{GBF99}, \cite{WR87}), given an $\eps>0$ there exists a set $\O_{\eps}\subset \O$ such that $|\O\setminus \O_{\eps}|<\eps$ and $u_n \to u_0$ uniformly in $\O_{\eps}$. Fix $\delta >0$ and we see that 
\be
\begin{split}\label{lsc1}
\int_{\O_{\eps}\cap \{u_0>\delta\}}\A(x,u_0) |\nabla u_0|^{p}\,dx&=\int_{\O_{\eps}\cap \{u_0>\delta\}}\A_{+}(x)|\nabla u_0|^p\,dx\\
&\le \liminf_{n\to \infty}\int_{\O_{\eps}\cap \{u_0>\delta\}}\A_+(x) |\nabla u_n|^{p}\,dx\\
&\le\liminf_{n\to \infty} \int_{\O_{\eps}\cap \{u_n>\frac{\delta}{2}\}} \A_+(x) |\nabla u_n|^{p}\,dx\\
&\le \liminf_{n\to \infty}\int_{\O\cap \{u_n>0\}} \A_+(x) |\nabla u_n|^{p} \,dx\\
&= \liminf_{n\to \infty} \int_{\O\cap \{u_n>0\}}  \A(x,u_n) |\nabla u_n|^{p} \,dx
\end{split}
\ee
and from \eqref{ellipticity} we can write
\be\label{1.5}
\int_{\O\setminus \O_{\eps}} \lambda |\nabla u_0|^p\,dx \le \int_{\O\setminus \O_{\eps}}  \A(x,u_0) |\nabla u_0|^{p}\le \int_{\O\setminus \O_{\eps}} \Lambda |\nabla u_0|^p \,dx\to 0 \mbox{  as $\eps\to 0$}.
\ee
By letting $\delta\to 0$ and $\eps\to 0$, combine \eqref{lsc1} and \eqref{1.5} and we have
\be\label{lsc+}
\int_{\O\cap \{u_0>0\}}  \A(x,u_0)|\nabla u_0|^{p}\,dx\le \liminf_{n\to \infty} \int_{\O\cap \{u_n>0\}} \A(x,u_n) |\nabla u_n|^{p} \,dx.
\ee
By considering the set $\O_{\eps}\cap \{u_0<-\delta\}$ in the equations \eqref{lsc1}, we can argue analogously to say that
\be\label{lsc-}
\int_{\O\cap \{u_0\le0\}}\ \A(x,u_0)|\nabla u_0|^{p} \,dx\le \liminf_{n\to \infty} \int_{\O\cap \{u_n\le0\}}  \A(x,u_n) |\nabla u_n|^{p}\,dx
\ee
lower semi-continuity of the other terms in $\F$ is well known, that is
$$
\int_{\O}f(x,u_0)u_0\,dx \,\le\,  \liminf_{n\to \infty} \int_{\O}f(x,u_n)u_n\,dx
$$
and since $u_n\to u_0$ pointwise almost everywhere in $\O$,
$$
\int_{\O}\gamma(x,u_0)\,dx \, \le \, \liminf_{n\to \infty}\int_{\O}\gamma(x,u_n)\,dx.
$$
Along with \eqref{lsc+} and \eqref{lsc-}, we get
$$
\F(u_0)\le \liminf_{n\to \infty}\F(u_n)=\min
$$
and it follows that $u_0$ is a minimizer of $\F$ in $W^{1,p}_{\phi}(\O)$. This concludes the Theorem \ref{existence}.

\end{proof}

Now that we have established the existence of a minimizer of $\F$, we can proceed to prove that any minimizer of $\F$ posses a mimimal local H\"older continuity in the domain $\O$. For the sake of convinience in notation, we define the following for $x\in \O$, $s\in \R$, $\xi \in \R^N$,
$$
F(x,s,\xi)=  \A(x,s)  |\xi|^{p} -f(x,s)s+\gamma(x,s) 
$$
and observe that there exists a $C>0$ such that $s\le s^p+C$ for all $s\ge 0$, using this and \eqref{ellipticity} we can write that
\be\label{estimates}
\lambda|\nabla u |^p-|f||u|^p-(C|f|+|\gamma|)\le F(x,u,\nabla u)\le \Lambda |\nabla u |^p+|f||u|^p+(C|f|+|\gamma|).
\ee
where, by slightly abusing the notation, we define $|f|=|f_{+}|+|f_-|$ and $|\gamma|=|\gamma_+|+|\gamma_-|$.

\begin{rema}
Existence result holds true for more general values of $p\in (1, \infty)$. Since further regularity results are known only for the range of $p$ considered above, we choose to stick to the limit $p\in [2,N)$. 
\end{rema}

\begin{theo}\label{holder}
Given a minimizer $u\in W^{1,p}(\O)$ of $\F$, then $u$ is locally bounded and locally H\"older continuous in $\O$. That is for all $\O'\Subset \O$ there exists a $M(\O')>0$ and $0<\alpha_0<1$ depending only on data of the problem such that
$$
\|u\|_{C^{\alpha_0}(\O')}\le M\|u\|_{L^{\infty}(\O')}.
$$
\end{theo}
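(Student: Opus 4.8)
The plan is to show that every minimizer $u$ of $\F$ is a \emph{quasi-minimum} of the model $p$-Dirichlet functional $v\mapsto\int|\nabla v|^p$ in the sense of Giaquinta-Giusti, and then to read off local boundedness and H\"older continuity directly from their regularity theory (\cite{gigi84}, \cite{eg05}). The decisive feature is that the reduction to a quasi-minimum uses only the two-sided bound \eqref{estimates}, hence only the structural constants $\lambda,\Lambda$ and the integrability of $f,\gamma$; it never sees any continuity or monotonicity of the discontinuous coefficients $\A_\pm,f_\pm,\gamma_\pm$, and this is exactly what makes the resulting modulus of continuity \emph{universal}.

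First I would establish the quasi-minimum inequality. Fix a ball $B_r\Subset\O$ and a competitor $v=u+\varphi$ with $\varphi\in W^{1,p}_0(B_r)$; since $\varphi$ vanishes near $\partial\O$ we have $v\in W^{1,p}_\phi(\O)$, so minimality gives $\int_{B_r}F(x,u,\nabla u)\le\int_{B_r}F(x,v,\nabla v)$ (the contributions outside $B_r$ cancel because $u=v$ there). Applying the lower bound in \eqref{estimates} on the left and the upper bound on the right isolates the principal parts $\lambda|\nabla u|^p$ and $\Lambda|\nabla v|^p$, yielding
$$
\int_{B_r}|\nabla u|^p\,dx\le Q\int_{B_r}|\nabla v|^p\,dx+\int_{B_r}\left(b(x)\bigl(|u|^p+|v|^p\bigr)+g(x)\right)dx,
$$
with $Q=\Lambda/\lambda$, $b=|f|/\lambda\in L^q(\O)$ for $q>\frac{N}{p}$, and $g=(2C|f|+2|\gamma|)/\lambda$, which is integrable and in fact locally bounded wherever $f$ is, since $\gamma$ is continuous. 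This is precisely the defining inequality for the Giaquinta-Giusti class of quasi-minima of $p$-growth with lower-order data of admissible integrability.

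Once $u$ is identified as such a quasi-minimum, the conclusion is a citation of their theory. For local boundedness one runs a De Giorgi truncation on the superlevel sets $A(k,\rho)=B_\rho\cap\{u>k\}$: testing the inequality above with $v=u-(u-k)_+\eta$ produces a Caccioppoli estimate, and the integrability $q>\frac{N}{p}$ is exactly what forces the iterated level-set energies to decay geometrically, giving $\sup_{B_\rho}u<\infty$; applying the same to $-u$ controls the infimum. For H\"older continuity, once $u\in L^\infty_{loc}$, one derives a decay-of-oscillation estimate
$$
\operatorname{osc}_{B_\rho}u\le C\left(\tfrac{\rho}{R}\right)^{\alpha_0}\operatorname{osc}_{B_R}u+(\text{lower order}),
$$
and the standard iteration lemma converts this into $\|u\|_{C^{\alpha_0}(\O')}\le M\|u\|_{L^{\infty}(\O')}$, with $\alpha_0$ and $M$ depending only on $N,p,q,\lambda,\Lambda$ and $\O'$, i.e.\ on the data.

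The genuinely delicate point is matching the natural-growth lower-order term $b(x)|u|^p$, which carries the \emph{same} exponent $p$ as the principal part, against the Giaquinta-Giusti hypotheses; a crude bound would not close the iteration. Here one relies precisely on $b=|f|/\lambda\in L^q$ with $q>\frac{N}{p}$, so that via Sobolev embedding this term can be absorbed into the Dirichlet energy in the De Giorgi scheme rather than requiring smallness of $b$. Everything else is robust: because the argument is insensitive to the phase discontinuities, the exponent $\alpha_0$ and constant $M$ are uniform over all minimizers and over all admissible coefficients sharing the constants in \eqref{ellipticity}, which is the uniformity exploited by the compactness arguments of Sections \ref{section3} and \ref{section4}.
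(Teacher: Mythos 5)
Your proposal is correct and follows essentially the same route as the paper: both verify that the growth bounds \eqref{estimates} place the minimizer in the Giaquinta--Giusti quasi-minima framework of \cite{eg05} (condition (7.2) there) and then invoke their local boundedness and H\"older continuity theorems. You simply make explicit the derivation of the quasi-minimum inequality and the De Giorgi iteration that the paper leaves inside the citations.
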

\begin{proof}
Since, $F$ satisfies the estimates \eqref{estimates}, the minimization problem \eqref{P} falls into the general setting of the variational problems studied in \cite{eg05}. That is, it satisfies the condition (7.2) in \cite[Section 7.1]{eg05}.

Note that, $F$ satisfies the hypothesis of \cite[Theorem 7.3]{eg05} which proves the local boundedness of $u$ in terms of its $L^{p}$ norm. From local boundedness of $u$ and \cite[Theorem 7.6]{eg05} we have local H\"older regularity of a minimizer of $\F$.
\end{proof}

\section{Small jumps and regularity in continuous medium}\label{section3}
In this section we will be proving that when $\A_+=A_-=\A\in C(\O)$ and $f_{\pm}\in L^N(\O)$, a minimizer $u$ of $\F$ satisfy local $C^{0,1^-}$ regularity estimates. We interpret the functional studied in \cite{LTQ15} as a tangential  free boundary problem, this strategy is adapted from \cite{TA15}. One can see \cite{TU13}, \cite{TE12}, \cite{TE11} for other applications of similar strategy as ours.

Also, we shall be proving the theorems in this section for a unit ball with centre at the origin. Which, on rescaling will represent a small ball contained inside a general domain $\O$. As we have already established the local boundedness of a solution in any general domain in the previous section, we will be assuming that for a minimizer $u$ of $\F$ in $B_1=B_1(0)$, $\|u\|_{L^{\infty}(B_1)}=1$.

We will first prove that as the oscillation of the diffusion coefficients $\A_{\pm}$ gets smaller, the graph of a minimizer $u$ of $\F$ tends to the graph of a $C^{0,1^-}$ function in $B_{1/2}$. This will lead us to asymptotic $C^{0,1^-}$ estimates in $B_{1/2}$ on the points located on the free boundary. 
Then we will use the Moser-Harnack inequality and some geometric arguments to prove that $u$ is locally $C^{0,1^-}$ regular when $\A_+=\A_-=\A\in C(\O)$.

\begin{lemm}\label{convergence}
Under fixed boundary condition $\varphi$, let $u\in W_{\varphi}^{1,p}(B_1)$ be a minimizer of functional $\F(\cdot,B_1)$ with $f_{\pm}\in L^N(B_1)$ then, for every $\eps>0$ there exists a $\delta>0$ such that if 
\be\label{coefficients}
\|\A-\A_0\|_{L^1(B_1)}\le \delta
\ee
for some constant $\A_0$, $\lambda \le \A_0 \le \Lambda$, then there exists a function $u_0\in C^{0,1^-}(B_{1/2})$ such that 
\be\label{uniform limit}
\|u-u_0\|_{L^{\infty}(B_{1/2})}\le \eps.
\ee
$u_0$ is such that for every $0<\beta<1$ the constant of  $\beta$-H\"older continuity, $C_0(\beta)$ depends only on $\A_0,\beta$ and data of the problem.

\end{lemm}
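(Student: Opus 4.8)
The plan is to argue by contradiction, combining the universal modulus of continuity from Theorem \ref{holder} with a compactness argument that identifies the limit of the minimizers as a minimizer of the constant-coefficient functional studied in \cite{LTQ15}. Suppose the conclusion fails for some $\eps_0>0$. Then there exist constants $\A_0^k\in[\lambda,\Lambda]$, coefficients $\A_k$ with $\|\A_k-\A_0^k\|_{L^1(B_1)}\le 1/k$, source terms admissible for the data, and minimizers $u_k\in W^{1,p}_{\varphi}(B_1)$ of the associated functionals $\F_k$ with $\|u_k\|_{L^\infty(B_1)}=1$, such that $\|u_k-w\|_{L^\infty(B_{1/2})}>\eps_0$ for every $w\in C^{0,1^-}(B_{1/2})$ satisfying the stated bounds.

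First I would extract compactness. The normalization gives a uniform $L^\infty$ bound, and comparing $u_k$ with a fixed competitor together with the coercivity estimate \eqref{1.3} yields a uniform bound on $\|u_k\|_{W^{1,p}(B_1)}$; hence $u_k\rightharpoonup u_\infty$ weakly in $W^{1,p}$ up to a subsequence. By Theorem \ref{holder} the family $\{u_k\}$ shares a universal modulus of continuity on $B_{1/2}$, so Arzel\`a--Ascoli upgrades this to $u_k\to u_\infty$ uniformly on $B_{1/2}$. Passing to a further subsequence, $\A_0^k\to\A_0\in[\lambda,\Lambda]$, so that $\A_k\to\A_0$ in $L^1(B_1)$; since both phases collapse onto the single constant $\A_0$, the natural limit functional is
\[
\F_0(v)=\int_{B_{1/2}}\A_0\,|\nabla v|^{p}-f(x,v)v+\gamma(x,v)\,dx,
\]
which is exactly of the type treated in \cite{LTQ15}.

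The heart of the matter is to show that $u_\infty$ minimizes $\F_0$, and the main obstacle is passing to the limit in the leading term: one must control
\[
\int_{B_{1/2}}\bigl(\A_k-\A_0\bigr)\,|\nabla u_k|^{p}\,dx,
\]
where $\A_k\to\A_0$ only in $L^1$ while $|\nabla u_k|^p$ is a priori merely bounded in $L^1$, so a crude H\"older estimate fails because $\|\A_k-\A_0\|_{L^\infty}$ is controlled below by the jump and is not small. To overcome this I would invoke a uniform reverse-H\"older / Gehring-type higher integrability estimate for the minimizers $u_k$ (see \cite{gigi84}, \cite{eg05}), giving $\nabla u_k\in L^{p+\sigma}_{loc}$ with a bound independent of $k$ for some $\sigma>0$; then $|\nabla u_k|^p$ is bounded in $L^{1+\sigma/p}_{loc}$. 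Interpolating $\|\A_k-\A_0\|_{L^{r}}\le\|\A_k-\A_0\|_{L^1}^{1/r}\,\|\A_k-\A_0\|_{L^\infty}^{1-1/r}\to 0$ for the dual exponent $r=(1+\sigma/p)'$, H\"older's inequality forces the displayed integral to vanish. The remaining lower-order terms pass to the limit by the measure-theoretic Egorov argument already used in Theorem \ref{existence}, and weak lower semicontinuity of $\xi\mapsto\A_0|\xi|^p$ handles the convexified gradient term; comparing $u_k$ against competitors built from an arbitrary perturbation of $u_\infty$ glued to $u_k$ in a thin annulus where the uniform convergence controls the gluing error, and using minimality of each $u_k$, then yields $\F_0(u_\infty)\le\F_0(w)$ for all admissible $w$.

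Finally, since $u_\infty$ minimizes the constant-coefficient functional $\F_0$ with $f\in L^N$ and continuous $\gamma$, the regularity theory of \cite{LTQ15} gives that $u_\infty$ is locally log-Lipschitz, hence $u_\infty\in C^{0,1^-}(B_{1/2})$ with, for each $\beta<1$, a H\"older constant $C_0(\beta)$ depending only on $\A_0$, $\beta$ and the data. Taking $u_0=u_\infty$ contradicts $\|u_k-u_0\|_{L^\infty(B_{1/2})}>\eps_0$, and the lemma follows. I expect the only delicate points to be the uniform higher-integrability input and the verification that the phase-dependent terms converge, both of which become routine once the Gehring estimate is in place.
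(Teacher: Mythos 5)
Your overall skeleton coincides with the paper's: argue by contradiction, use the universal H\"older modulus from Theorem \ref{holder} plus Arzel\`a--Ascoli and weak $W^{1,p}$ compactness to extract a limit $u_\infty$, show $u_\infty$ minimizes the constant-coefficient functional, and invoke \cite{LTQ15} to place $u_\infty$ in $C^{0,1^-}(B_{1/2})$, contradicting the negation of \eqref{uniform limit}. Where you genuinely diverge is in the technical core, the passage to the limit in the term $\int \A_k(x,u_k)|\nabla u_k|^p$. The paper never tries to send $\int(\A_k-\A_0)|\nabla u_k|^p$ to zero along the minimizing sequence: it only needs the one-sided inequality \eqref{principle part}, which it obtains by Egorov (applied to both $\{\A_k\}$ and $\{u_k\}$), weak convergence of $\A_k(\cdot,u_k)^{1/p}\nabla u_k$ to $\A_0^{1/p}\nabla u_0$ on the good set, and weak lower semicontinuity of the $L^p$ norm, exactly as in \eqref{lsc1}--\eqref{lsc-}; the smallness of $\A_0-\A_k$ is exploited only against a \emph{fixed} competitor $v$ in \eqref{part2}, where dominated convergence suffices, and minimality is transferred directly because all the $u_k$ and all competitors share the boundary datum $\varphi$, so no gluing is needed. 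You instead force the stronger convergence $\int(\A_k-\A_0)|\nabla u_k|^p\to 0$ via a uniform Gehring higher-integrability estimate for quasi-minima and an $L^1$--$L^\infty$ interpolation, and you recover minimality of the limit by De Giorgi's cut-and-paste construction in a thin annulus. Both routes are sound: your Gehring-plus-gluing argument is heavier machinery but is more robust (it localizes cleanly, does not rely on the shared boundary data, and would survive perturbations of the lower-order terms), while the paper's argument is more elementary, needing nothing beyond Egorov and convexity of $\xi\mapsto|\xi|^p$. The only points you should make explicit are that the quasi-minimality constants, hence the higher-integrability exponent $\sigma$ and its bound, are uniform in $k$ (they depend only on $\lambda,\Lambda$ and the lower-order data, which is true here), and that the interior nature of the Gehring estimate is compatible with your localized gluing — which it is, since \cite{LTQ15} is only needed for interior regularity on $B_{1/2}$.
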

\begin{proof}
We argue by contradiction, i.e. let there is a sequence of functions $\A_k$ satisfying \eqref{ellipticity} such that
\be\label{converging coeff}
\lim_{k\to \infty} \|\A_k-\A_0\|_{L^1(B_1)}=0
\ee 
and there exists $\eps_0>0$ such that
for every $w\in C^{0,1^-}(B_{1/2})$ 
\be\label{contradiction}
\|u-w\|_{L^{\infty}(B_{1/2})}>\eps_0.
\ee

Before moving into the main body of the proof, we stop to make some observations. From the hypothesis \eqref{converging coeff}, we write $\A_{\pm,k}\to \A_{0}$ in $L^1(B_1)$. Then, upto a subsequence, $\A_{\pm,k}\to \A_0$ pointwise almost everywhere in $B_1$. 

The functionals $\F_k=\F_{\A_k}$ satisfy the hypothesis of the Theorem \ref{holder} in $B_1$, thus we can show that the functions in the sequence $\{u_k\}$ of minimizers of $\F_k$ are locally H\"older continuous in $B_1$. That is
$$
\|u_k\|_{C^{\alpha_0}(\overline B_{1/2})}<K_0
$$
for some $K_0>0$, not depending on $k\in \N$. $\alpha_0$ is as in the Theorem \ref{holder}.

Therefore, we can apply Arzela Ascoli theorem to $\{u_k\}$ and  there exists a function $h\in C^{0,\alpha_0}(B_{1/2})$ such that 
\be\label{AA limit}
u_k\to h\;\;\mbox{uniformly in $B_{1/2}$ up to a subsequence}.
\ee 

Therefore by compact embedding (see \cite{evans}, \cite{HB10}), we have  a function $u_0\in W_{\varphi}^{1,p}(B_1)$ such that 
\be\label{weak limit}
\begin{split}
\{u_k\}\rightharpoonup u_0 &\mbox{ in $W^{1,p}(B_1)$ and}\\
\{u_k\}\to u_0& \mbox{ in $L^p(B_1)$}
\end{split}
\ee
up to a subsequence.
From \eqref{AA limit} and \eqref{weak limit} we can say that $u_0=h$ almost everywhere in $B_{1/2}$.  Also note that $\|u_k\|_{W^{1,p}(B_1)}$ is uniformly bounded in $k$, from the assumptions mentioned in the beginning of the section, \eqref{ellipticity} and minimility of $u_k$ for the functional $\F_k$ we have

\be\label{equiintegrability}
 \begin{split}
\lambda \int_{B_1}|\nabla u_k|^p\,dx \,\le \int_{B_1} \A_k(x,u_k)|\nabla u_k|^p\,dx\le \F_k(\phi)+\int_{B_1}f(x,u_k)u_k-\gamma(x,u_k)\,dx\\
\, \le \Lambda \int_{B_1}|\nabla \phi|^p\,dx + C<C_0<\infty.
\end{split} 
\ee

We proceed to show that the function $u_0\in W_{\varphi}^{1,p}(B_1)$ is a minimizer of 
\be\label{F0}
\F_0(v)=\int_{B_1}\A_0 |\nabla v|^{p}-f(x,v)v+\gamma(x,v)\,dx
\ee
observe that
\be\label{principle part}
\liminf_{k\to \infty} \int_{B_1} \A_k (x,u_k) |\nabla u_k|^{p} \,dx  \ge   \int_{B_1} \A_0 |\nabla u_0|^{p}\,dx.
\ee
Indeed the inequation \eqref{principle part} can be shown via a set of arguments similar to those in the proof of the Theorem \ref{existence}. Only little difference is that we need to apply Egorov's theorem to the sequences $\{u_k\}$ as well as $\{\A_k\}$. Fix $\eps'>0$ and let $\O_{\eps'}\subset B_1$ be such that $\A_k \to \A_0$ and $u_k \to u_0$ uniformly in $\O_{\eps'}$ and $|B_1\setminus \O_{\eps'}|<\eps'$. From this information, one can easily verify that the sequence $\{ |\A_k(\cdot,u_k)|^{\frac{1}{p}}\nabla u_k \}$ weakly converges to $|\A_0|^{\frac{1}{p}} \nabla u_0$ in $L^{p}(\O_{\eps'})$.

Then we can proceed exactly like \eqref{lsc1} - \eqref{lsc-} to prove \eqref{principle part}.

As $k\to \infty$ from \eqref{AA limit} we have
\be\label{FB part}
\liminf_{k\to \infty}\int_{B_1} \gamma(u_k,x)\,dx  \ge \int_{B_1}\gamma(u_0,x)\,dx.
\ee
Adding \eqref{principle part} and \eqref{FB part}, we get the following inequality:
\be\label{part1}
\liminf_{k\to \infty} \F_k(u_k) \ge\,\F_0(u_0)
\ee
Moreover, for any $v\in W_{\varphi}^{1,p}(B_1)$
\be\label{part2}
\F_0(v) \ge  \F_k(v) 
+ \int_{B_1 } (\A_{0}-\A_{k}(x,v))  |\nabla v|^{p} \,dx
\ee
the last term in \eqref{part2} goes to Zero (following the same reasoning as \eqref{principle part}) as $k\to \infty$. Therefore from \eqref{part1} and \eqref{part2}, we have
$$
\F_0(v)  \ge \lim_{k\to \infty} \F_k(v)\ge \liminf_{k\to \infty}\F_k(u_k)\ge \F_0(u_0) 
$$
this shows that $u_0$ is the minimizer of $\F_0$.

From \cite{LTQ15} we know that the function $u_0$ is a locally log-Lipschitz (and therefore locally $C^{0,1^-}$) function in $B_{1}$, that is $u_0\in C^{0,1^-}(B_{1/2})$. If we take $w=u_0$ in \eqref{contradiction}, we get a contradiction. Hence we prove the lemma.

\end{proof}

\begin{lemm}\label {far from FB 0}
With $0\in \partial \{u>0\}$ and hypothesis of the previous lemma, for every $0<\alpha<1$ there exists a $0<r_0 <1/2$ and a $\delta=\delta(\alpha)>0$ such that if 
$$
\|\A-\A_0\|_{L^1(B_1)} <\delta
$$
then
$$
\sup_{B_{r_0}}|u|\le r_0^{\alpha}.
$$
\end{lemm}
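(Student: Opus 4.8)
The plan is to transfer the interior H\"older regularity of the limiting profile $u_0$ supplied by Lemma \ref{convergence} back to $u$, exploiting that $\beta$-H\"older continuity with an exponent $\beta>\alpha$ beats the target $\alpha$-growth at small scales. Fix $\alpha\in(0,1)$ and choose an intermediate exponent $\beta$ with $\alpha<\beta<1$. By Lemma \ref{convergence}, for every $\eps>0$ there is a $\delta>0$ such that $\|\A-\A_0\|_{L^1(B_1)}\le\delta$ produces a function $u_0\in C^{0,\beta}(B_{1/2})$ with $\|u-u_0\|_{L^\infty(B_{1/2})}\le\eps$; crucially, the $\beta$-H\"older constant $C_0=C_0(\beta)$ of $u_0$ depends only on $\A_0$, $\beta$ and the data of the problem, and not on $u$ or on $\eps$.

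First I would record that $u(0)=0$: indeed $0\in\partial\{u>0\}$ and $u$ is continuous by Theorem \ref{holder}, so the free boundary point is a zero of $u$. Consequently $|u_0(0)|\le\|u-u_0\|_{L^\infty(B_{1/2})}\le\eps$, and the H\"older estimate for $u_0$ gives, for every $x\in B_r$ with $r<1/2$,
\[
|u(x)|\le |u_0(x)|+\eps \le |u_0(0)|+C_0|x|^\beta+\eps \le 2\eps + C_0 r^\beta.
\]

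It remains to choose the free parameters so that the right-hand side is dominated by $r_0^\alpha$. Writing $C_0 r^\beta=C_0 r^{\beta-\alpha}\,r^\alpha$ and using $\beta-\alpha>0$, I would first fix $r_0=\min\{1/4,(4C_0)^{-1/(\beta-\alpha)}\}$, so that $C_0 r_0^{\beta-\alpha}\le\tfrac14$ and hence $C_0 r_0^\beta\le\tfrac14 r_0^\alpha$; note that $r_0<1/2$ depends only on $\alpha$, $\beta$ and the data through $C_0$. Having fixed $r_0$, I would then set $\eps=\tfrac14 r_0^\alpha$ and let $\delta=\delta(\eps)=\delta(\alpha)$ be the corresponding threshold from Lemma \ref{convergence}. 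With these choices the displayed bound yields $\sup_{B_{r_0}}|u|\le 2\eps+C_0 r_0^\beta\le\tfrac12 r_0^\alpha+\tfrac14 r_0^\alpha\le r_0^\alpha$, which is the assertion.

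The only delicate point is the order of quantifiers: the exponent $\beta$, and hence the uniform H\"older constant $C_0$, must be fixed before the scale $r_0$, and $r_0$ before $\eps$ (and therefore before $\delta$), so that $r_0$ and $\delta$ end up depending only on $\alpha$ and the data. This is exactly the structure that makes the estimate usable in an iteration over dyadic scales; the genuine analytic content --- the uniform interior H\"older bound with a data-dependent constant --- has already been isolated in Lemma \ref{convergence}, so no further compactness argument is required here.
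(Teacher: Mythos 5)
Your proposal is correct and follows essentially the same route as the paper: both use the uniform approximation from Lemma \ref{convergence} together with the data-dependent $\beta$-H\"older constant of $u_0$ for some $\beta>\alpha$, apply the triangle inequality at the free boundary point $0$ (where $u(0)=0$), and then fix $r_0$ first so that $C(\beta)r_0^{\beta}$ is a fraction of $r_0^{\alpha}$ before choosing $\eps$ (and hence $\delta$) accordingly. The only differences are cosmetic (constants $1/4$ versus $1/3$, and your explicit cap $r_0\le 1/4$ to guarantee $r_0<1/2$, which is if anything slightly more careful than the paper).
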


\begin{proof}
For an $\eps>0$ to be chosen later (and accordingly $\delta>0$) we have from the Lemma \ref{convergence} that 
\be\label{5.1}
\|\A-\A_{0}\|_{L^{1}(B_1)}<\delta \Rightarrow \|u-u_0\|_{L^{\infty}(B_{1/2})}<\eps
\ee
for some $u_0\in C^{0,1^-}(B_{1/2})$, that is, for every $0<\beta<1$ there exists $C(\beta)>0$ such that 
\be\label{5.2}
\sup_{B_r}|u_0(x)-u_0(0)|\le C(\beta) r^{\beta}\qquad\mbox{$B_r\subset B_{1/2}$}
\ee
using \eqref{5.1} and \eqref{5.2} we have
\be\label{5.3}
\begin{split}
\sup_{B_r}|u(x)-u(0)|&\le \sup_{B_r}\Big ( |u(x)-u_0(x)|+|u_0(x)-u_0(0)|+|u_0(0)-u(0)| \Big ) \\
&\le 2\eps+C(\beta )r^{\beta}
\end{split}
\ee
now select $\beta$ such that, $1>\beta>\alpha$ and $r=r_0>0$ such that 
$$
C(\beta)r_0^{\beta}=\frac{r_0^{\alpha}}{3}
$$
that is 
$$
r_0=\Big (  \frac{1}{3C(\beta)} \Big )^{1/\beta-\alpha}
$$
and select $\eps>0$ (and accordingly $\delta(\eps)>0$) such that 
$$
\eps<\frac{r_0^{\alpha}}{3}
$$
now from \eqref{5.3} and using the assumption that $u(0)=0$, we prove the lemma.
\end{proof}

\begin{lemm}\label{far from FB}
Following the hypothesis of the previous lemma, for all $0<\alpha<1$ there exists $C_0>0$ depending only on $\alpha$ and data of the problem such that if
$$
\|\A-\A_0\|_{L^1(B_1)}<\delta
$$
implies 
$$
\sup_{B_{r}}|u(x)|\le C_0 r^{\alpha} \qquad \mbox{for $r<r_0$}.
$$
($r_0$ and $\delta$ are as in the Lemma \ref{far from FB 0}.)
\end{lemm}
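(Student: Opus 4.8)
The plan is to bootstrap the single–scale estimate of Lemma \ref{far from FB 0} into the full power–law decay by a discrete zooming (rescaling) argument, iterating that lemma along the geometric sequence of radii $\rho_k=r_0^{k}$, $k\in\N$. First I would reduce the claim to the discrete statement
\[
\sup_{B_{r_0^{k}}}|u|\le r_0^{k\alpha}\qquad\text{for every }k\in\N ,
\]
because once this holds, any $r\in(0,r_0)$ satisfies $r_0^{k+1}\le r<r_0^{k}$ for a unique $k$, and then
\[
\sup_{B_r}|u|\le \sup_{B_{r_0^{k}}}|u|\le r_0^{k\alpha}=r_0^{-\alpha}\,(r_0^{k+1})^{\alpha}\le r_0^{-\alpha}r^{\alpha},
\]
which is exactly the assertion with $C_0=r_0^{-\alpha}$, a constant depending only on $\alpha$ and data since $r_0$ does.

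The discrete estimate is proved by induction on $k$. The case $k=0$ is the normalization $\|u\|_{L^\infty(B_1)}=1$ and the case $k=1$ is Lemma \ref{far from FB 0}. For the inductive step I would introduce the rescaled function
\[
u_k(y)=\frac{u(r_0^{k}y)}{r_0^{k\alpha}},\qquad y\in B_1 ,
\]
which, by the inductive hypothesis, satisfies $\|u_k\|_{L^\infty(B_1)}\le 1$ and which has $0\in\partial\{u_k>0\}$, since rescaling preserves the sign of $u$. A change of variables $x=r_0^{k}y$ shows that $u_k$ minimizes a functional of the form \eqref{P} on $B_1$: after dividing by the leading power $r_0^{k(N+(\alpha-1)p)}$, the diffusion coefficient becomes $\A(r_0^{k}\cdot)$ (still obeying \eqref{ellipticity} with the same $\lambda,\Lambda$), the source term is multiplied by $r_0^{k(\alpha+(1-\alpha)p)}$, and the compensation term by $r_0^{k(1-\alpha)p}$. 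Applying Lemma \ref{far from FB 0} to $u_k$ would then give $\sup_{B_{r_0}}|u_k|\le r_0^{\alpha}$, i.e. $\sup_{B_{r_0^{k+1}}}|u|\le r_0^{(k+1)\alpha}$, closing the induction.

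For this to be legitimate I must check that the rescaled data stay uniformly controlled, so that Lemma \ref{far from FB 0} applies with one and the same pair $(r_0,\delta)$ at every scale. The compensation term carries the positive power $r_0^{k(1-\alpha)p}$ and hence stays bounded. The source is the delicate one: although $\|f(r_0^{k}\cdot)\|_{L^N(B_1)}$ inflates like $r_0^{-k}$, the prefactor $r_0^{k(\alpha+(1-\alpha)p)}$ compensates it, leaving the net bound $r_0^{k(1-\alpha)(p-1)}\|f\|_{L^N}\le\|f\|_{L^N}$. This is precisely why the section strengthens the hypothesis to $f_\pm\in L^N$ and uses $p\ge 2$: for a merely subcritical $f\in L^q$ with $q>N/p$ the rescaled source need not stay bounded under this scaling.

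The main obstacle is the coefficient closeness: I must guarantee $\|\A(r_0^{k}\cdot)-\A_0\|_{L^1(B_1)}<\delta$ at every scale, whereas a naive rescaling of the $L^1$ norm inflates it by $r_0^{-kN}$. This is resolved by the standing continuity assumption $\A\in C(\O)$ of this section. Fixing the single admissible constant $\A_0=\A(0)$, one has
\[
\|\A(r_0^{k}\cdot)-\A(0)\|_{L^1(B_1)}=\int_{B_1}\bigl|\A(r_0^{k}y)-\A(0)\bigr|\,dy\le |B_1|\,\mathrm{osc}_{B_{r_0^{k}}}\A\le |B_1|\,\mathrm{osc}_{B_{r_0}}\A ,
\]
so the measure $|B_{r_0^{k}}|$ exactly cancels the offending $r_0^{-kN}$, and, crucially, the oscillation only shrinks as we zoom in. Hence it suffices to arrange $|B_1|\,\mathrm{osc}_{B_{r_0}}\A<\delta$ at the top scale, which can be secured by choosing $r_0$ (equivalently $\delta$) small relative to the modulus of continuity of $\A$, still a quantity depending only on $\alpha$ and data. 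Using the fixed constant $\A(0)$ at every scale has the added benefit that the comparison problem $\F_0$ of Lemma \ref{convergence}, and therefore its H\"older constant $C_0(\beta)$, is the same at each step, so that $r_0,\delta$ are genuinely uniform in $k$ and the induction is self-consistent.
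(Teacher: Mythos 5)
Your proof is correct and follows essentially the same route as the paper: the same induction on the scales $r_0^k$, the same rescaling $v(y)=r_0^{-k\alpha}u(r_0^k y)$ with the same exponents on the rescaled source and compensation terms (in particular the bound $r_0^{k(1-\alpha)(p-1)}\|f\|_{L^N}\le\|f\|_{L^N}$), and the same final dyadic interpolation giving $C_0=r_0^{-\alpha}$. The one place you go beyond the paper is in verifying that the rescaled coefficient still satisfies the closeness hypothesis --- the paper applies Lemma \ref{far from FB 0} at every scale without comment even though $\|\A(r_0^{k}\cdot)-\A_0\|_{L^1(B_1)}=r_0^{-kN}\|\A-\A_0\|_{L^1(B_{r_0^{k}})}$ is not controlled by $\|\A-\A_0\|_{L^1(B_1)}$ alone --- and your fix via the continuity (equivalently, an $L^\infty$-type closeness) of $\A$ is precisely what makes the iteration self-consistent in the subsequent application to Lemma \ref{main result}.
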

\begin{proof}
Let us first show that for all $k\in \N$
$$
 \sup_{B_{r_0^{k}}}|u(x)|\le r_0^{k\alpha }.
$$
We already know from the Lemma \ref{far from FB 0} that the above claim is true for $k=1$. Suppose it is true up to a value $k_0\in \N$. We claim that it is also true for $k_0+1$. 

Define $v(y)=\frac{1}{r_0^{k_0\alpha}}u(r_0^{k_0}y)$ for $y\in B_1=B_1(0)$.
We have 
$$
\nabla v(y)=\frac{1}{r_0^{k_0(\alpha-1)}}\nabla u(r_0^ky)
$$
by change of variables we have
\be
\begin{split}
\F_{\A}(u;B_{r_0^{k_0}})=&r_0^{Nk_0}\int_{B_1}r_0^{pk_0(\alpha-1)} \A(r_0^{k_0}y,v) |\nabla v(y)|^{p}\,dy- \\
&r_0^{Nk_0}\int_{B_1}r_0^{k_0\alpha}vf(r_0^{k_0}y,v)+\gamma(r_0^{k_0}y,v)\,dy.
\end{split}
\ee
\end{proof}
See that, $v$ is a minimizer of $\tilde \F=\F_{\tilde \A, \tilde f,\tilde \gamma}(v;B_1)$ with 
\[
\begin{split}
&\tilde \A(y,s)=\A(r_0^{k_0}y,s)\\
&\tilde f(y,s)=r_0^{(pk_0(1-\alpha)+k_0\alpha)}f(r_0^{k_0}y,s)\\
&\tilde \gamma (y,s)=r_0^{pk_0(1-\alpha)} \gamma (r_0^{k_0}y,s)
\end{split}
\]
moreover, 
$$
\|\tilde f\|_{L^{N}(B_1)}=r_0^{k_0(1-\alpha)(p-1)}\|f\|_{L^N(B_{r^k})}\le \|f\|_{L^N(B_1)}.
$$
Since $\sup _{B_{1}}|v|= r_0^{-k_0 \alpha} \sup_{B_{r_0^{k_0}}}|u|\le 1$, $ \tilde \F$ satisfies the hypothesis of the Lemma \ref{far from FB 0} and since $0\in \partial \{v>0\}$ we have
$$
\sup_{B_{r_0}}|v|\le r_0^{\alpha}
$$
substituting $u$ from the definition of $v$
$$
\sup_{B_{r_0^{k_0+1}}}|u|\le r_0^{(k_0+1)\alpha}.
$$
This proves the claim. Now we prove the lemma using a classical iteration argument. Let $0<r<r_0$ and select $k$ such that $r_0^{k+1}<r<r_0^k$, and we see that 
$$
\sup_{B_r}|u|\le \sup_{B_{r_0^{k}}}|u|\le r_0^{k\alpha}=r_0^{(k+1)\alpha}\frac{1}{r_0^{\alpha}}\le \frac{1}{r_0^{\alpha}}r^{\alpha}.
$$
This concludes the proof.

\begin{rema}\label{rem3.4}

We have obtained a local asymptotic $C^{0,1^-}$ regularity estimates on $u$ at points on the free boundary. If $\A_{\pm}(x)$ are separately continuous and $f_{\pm}\in L^N$, minimizer $u$ satisfy the following Euler Lagrangian equation in positive and negative phases
$$
\begin{cases}
-\dive(\A_+(x)|\nabla u|^{p-2}\nabla u)=f_+\qquad \mbox{in $\{u>0\}$}\\
-\dive(\A_-(x)|\nabla u|^{p-2}\nabla u)=f_-\qquad \mbox{in $\{u<0\}$}.
\end{cases}
$$
Since $p<N$, from \cite[Theorem 4.2]{EVT11} we have local $C^{0,1^-}$ regularity estimates on $u$ away from the free boundary. However, these regularity estimates deteriorate as we move closer to the free boundary and therefore we cannot yet conclude that local asymptotic $C^{0,1^-}$ regularity estimates holds in the entire domain under consideration. In order to prove it, we will proceed using our information on how those estimates obtained in \cite{EVT11} deteriorate near the free boundary and make use of the non-homogenous Moser-Harnack inequality with some geometric arguments.
\end{rema}

\begin{lemm}\label{main result}
Let $\A_{\pm}\in C(B_1)$ and with the same hypothesis as in the previous theorem, for all $0<\alpha<1$ there exists $\delta>0$ depending on $\alpha$ and data of the problem such that if 
$$
\|\A-\A_0\|_{L^1(B_1)}< \delta
$$
we have  
$$
u\in C^{\alpha}(B_{1/2}).
$$
\end{lemm}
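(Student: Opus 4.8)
The plan is to upgrade the free boundary decay of Lemma \ref{far from FB} into a genuine interior Hölder estimate by a distance-to-the-free-boundary argument, patching together two pieces of information: the decay of $u$ at points of $\partial\{u>0\}$ coming from Lemma \ref{far from FB}, and the interior regularity of \cite{EVT11} in balls where $u$ keeps a fixed sign, made quantitative by the rate at which it degenerates as one approaches the free boundary. Throughout, $\delta$ is the threshold supplied by Lemma \ref{far from FB} for the chosen $\alpha$; the continuity $\A_\pm\in C(B_1)$ is used precisely to guarantee that around every point of $B_{1/2}$ and at every sufficiently small scale the rescaled coefficients stay within $\delta$ in $L^1$ of the constant given by their value at the centre, so that Lemma \ref{far from FB} (and the tangential comparison of Lemma \ref{convergence}) can be invoked at any free boundary point and at all small scales.

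First I would establish the pointwise gradient estimate
$$
|\nabla u(x_0)|\le C\,\dist(x_0,\partial\{u>0\})^{\alpha-1}
$$
for every $x_0\in B_{1/2}$ off the free boundary, with $C$ depending only on $\alpha$ and the data. Writing $d=\dist(x_0,\partial\{u>0\})$, the ball $B_d(x_0)$ lies entirely in one phase, so by Remark \ref{rem3.4} the function $u$ solves there the quasilinear equation $-\dive(\A_\pm(x)|\nabla u|^{p-2}\nabla u)=f_\pm$ with continuous coefficients. Rescaling $v(z)=d^{-\alpha}u(x_0+dz)$ on $B_1$, the decay of Lemma \ref{far from FB} at the nearest free boundary point $y$ gives $\|v\|_{L^\infty(B_1)}\le C_0 2^\alpha$, while the rescaled source obeys $\|\tilde f\|_{L^N(B_1)}=d^{(1-\alpha)(p-1)}\|f\|_{L^N(B_d(x_0))}\le\|f\|_{L^N(B_1)}$ since $(1-\alpha)(p-1)\ge 0$. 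Applying \cite[Theorem 4.2]{EVT11} to $v$ (together with the non-homogeneous Moser–Harnack inequality to control the oscillation of $v$ from its sup bound) yields $|\nabla v(0)|\le C$, and undoing the scaling gives the displayed bound; the negative phase is treated symmetrically.

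With the gradient bound in hand, the Hölder estimate follows from a two-case geometric argument. Given $x,y\in B_{1/2}$, set $r=|x-y|$ small. If $\min(\dist(x,\partial\{u>0\}),\dist(y,\partial\{u>0\}))\ge 2r$, then since the distance function is $1$-Lipschitz the segment $[x,y]$ stays at distance at least $r$ from the free boundary, and integrating the gradient bound along it gives $|u(x)-u(y)|\le C r^{\alpha-1}\cdot r=Cr^\alpha$. Otherwise one of the points, say $x$, satisfies $\dist(x,\partial\{u>0\})<2r$; then both $x$ and $y$ lie within distance $3r$ of the free boundary, and applying Lemma \ref{far from FB} at the respective nearest free boundary points, where $u$ vanishes by continuity, bounds $|u(x)|$ and $|u(y)|$ each by $Cr^\alpha$, whence $|u(x)-u(y)|\le Cr^\alpha$. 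Combining the two cases yields $u\in C^{0,\alpha}(B_{1/2})$.

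The main obstacle is the first ingredient, namely extracting the exact degeneration rate $d^{\alpha-1}$ of the interior estimates. This requires tracking the scaling of both the $L^\infty$ norm and the $L^N$ norm of the source under $v=d^{-\alpha}u(x_0+d\,\cdot\,)$ and verifying that the hypotheses of Lemma \ref{far from FB} and of \cite{EVT11} survive it; here the exponent $(1-\alpha)(p-1)\ge 0$ is exactly what keeps $\|\tilde f\|_{L^N}$ bounded, and the continuity of $\A_\pm$ is exactly what keeps the coefficient-closeness hypothesis valid at shrinking scales. A secondary point requiring care is that the free boundary decay must hold at every free boundary point in $B_{1/2}$, not merely at the origin as stated, which is again supplied by translation together with the uniform continuity of $\A_\pm$ on $\overline{B_{1/2}}$.
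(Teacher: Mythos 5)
Your overall architecture is the same as the paper's: decay of $u$ at free boundary points from Lemma \ref{far from FB}, interior estimates in each phase that degenerate at a controlled rate as $d=\dist(x_0,\partial\{u>0\})\to0$, and a patching argument combining the two. The paper also invokes Serrin's Harnack inequality \eqref{harnack} before applying Lemma \ref{far from FB}, but your shortcut of bounding $\sup_{B_{2d}(y_0)}|u|$ directly by $C d^{\alpha}$ is legitimate and is in fact how the paper closes the estimate.

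There is, however, a genuine gap in your first ingredient. You claim that applying \cite[Theorem 4.2]{EVT11} to the rescaled function $v$ yields $|\nabla v(0)|\le C$, hence the pointwise bound $|\nabla u(x_0)|\le C\,d^{\alpha-1}$. That theorem (and Campanato's characterization, as used in \eqref{schauder}) gives only $C^{0,1^-}$ interior estimates, i.e.\ a bound on the $C^{0,\beta}$ seminorm for every $\beta<1$ --- not a gradient bound. Indeed, with source $f_{\pm}\in L^N$ (the borderline integrability) and coefficients $\A_{\pm}$ that are merely continuous, interior Lipschitz or $C^1$ estimates are simply not available, even for the linear model $-\dive(\A(x)\nabla u)=f$; so the quantity $|\nabla u(x_0)|$ you propose to estimate need not be controlled at all. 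Consequently the first case of your two-case argument, which integrates $|\nabla u|$ along the segment $[x,y]$, does not go through as written. The repair is exactly the paper's move: replace the gradient bound by the seminorm bound $[u]_{C^{0,\alpha}(B_{d/2}(x_0))}\le C d^{-\alpha}\|u\|_{L^{\infty}(B_{2d/3}(x_0))}$ (or use an exponent $\beta\in(\alpha,1)$ and absorb the deficit via $|x-y|\le d$), which combined with $\|u\|_{L^{\infty}(B_{2d}(y_0))}\le C_2 d^{\alpha}$ gives a Hölder constant that stays bounded as $d\to0$; your case distinction then closes the argument without ever touching $\nabla u$. Your remarks about tracking $\|\tilde f\|_{L^N}$ under rescaling and about transporting Lemma \ref{far from FB} to arbitrary free boundary points via uniform continuity of $\A_{\pm}$ are correct and are the right points to be careful about.
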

\begin{proof}
As mentioned in the previous remark, we only need to show that the constant of $\alpha$ H\"older continuity does not blow up as we move closer to the free boundary. This information along with the local estimate in \cite[Theorem 4.2]{EVT11}, will prove our claim.

Let $x_0\in \{u>0\}\cap B_{1/2}$ be very close to the free boundary. From \cite[Theorem 1]{serrin63}, we know that $u$ satisfy the  non-homogenous Moser-Harnack inequality for $r<d/4$ ($d=\dist(x_0,\partial \{u>0\})$)
\be\label{harnack}
\sup_{B_r(x_0)} u\le C \left (  \inf_{B_{r/2}}u+r\|f\|_{L^N(B_r(x_0))} \right ).
\ee
Also, we know from [\cite{EVT11}, Theorem 4.2] and Campanato's theorem that $u$ satisfy interior $C^{0,1^-}$ estimates in $\{u>0\}$ which, for a given $0<\alpha<1$, can be written as follows
\be\label{schauder}
\|u\|_{C^{\alpha}(B_r(x_0))} \le  \|u\|_{C^{\alpha}(B_{d/2}(x_0))}\le \frac{C}{d^{\alpha}}\|u\|_{L^{\infty}(B_{2d/3})}
\ee

using \eqref{harnack} in \eqref{schauder} by putting appropriate value of $r>0$, we can get
\be\label{control 0}
\|u\|_{C^{\alpha}(B_{r}(x_0))}\le \frac{C_1}{d^{\alpha}} \left ( u(x_0)+d\|f\|_{L^N(B_1)}  \right ).
\ee
And now we make use of the Lemma \ref{far from FB}. Let $y_0\in \{u>0\}$ be a point such that $d=\dist(x_0,y_0)=\dist (x_0,\partial \{u>0\})$. From the Lemma \ref{far from FB} we have
\be\label{holder on FB}
\sup_{B_{2d}(y_0)}|u|\le C_2 d^{\alpha}.
\ee
Observe that $x_0\in B_{2d}(y_0)$ and then we combine \eqref{control 0} and \eqref{holder on FB} to get
$$
\|u\|_{C^{\alpha}(B_{r}(x_0))}\le C_1C_2+C_1d^{1-\alpha}\|f\|_{L^N(B_1)}
$$
that is 
$$
\|u\|_{C^{\alpha}(B_{r}(x_0))}\le C_3+C_12^{1-\alpha}\|f\|_{L^N(B_1)}.
$$
From the Remark \ref{rem3.4}, we already know that $u\in C_{\mathrm{loc}}^{0,\alpha}(\{u>0\}\cap B_{1/2})$ and $u\in C_{\mathrm{loc}}^{0,\alpha}(\{u<0\}\cap B_{1/2})$. Hence, the asymptotic $C^{0,1^-}$ regularity estimates on $u$ in $B_{1/2}$ is proved.
\end{proof}

\begin{theo}[Regularity in a continuous medium]\label{continuous medium}
If $u\in W_{\phi}^{1,p}(\O)$ is minimizer of $\F(\cdot,\O)$, $\A_{+}=\A_{-}=\A\in C(\O)$ and $f_{\pm}\in L^N(\O)$. Then $u\in C^{0,1^-}_{\mathrm{loc}}(\O)$.
\end{theo}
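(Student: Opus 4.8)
The plan is to deduce the global regularity from the single-ball estimate of Lemma~\ref{main result} by combining a scaling argument with a covering of $\O'$. The two structural facts that make this work are that continuity of $\A$ forces its $L^1$-oscillation to be small on small balls (so that the smallness hypothesis $\|\A-\A_0\|_{L^1(B_1)}<\delta$ can always be met after rescaling to $B_1$), and that the hypothesis $f_\pm\in L^N$ is exactly scale-critical for $\F$, so the rescaled source stays bounded in $L^N(B_1)$.

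Fix $\O'\Subset\O$ and $\alpha\in(0,1)$, and let $\delta=\delta(\alpha)>0$ be the threshold from Lemma~\ref{main result}. Since $u$ is locally bounded by Theorem~\ref{holder}, choose $\O'\Subset\O''\Subset\O$ and set $M=\|u\|_{L^\infty(\O'')}$. As $\A$ is uniformly continuous on $\overline{\O''}$ with some modulus $\omega$, I would fix a single radius $\rho=\rho(\alpha)>0$, uniform in the base point, so small that $B_\rho(x_0)\subset\O''$ for all $x_0\in\O'$ and $|B_1|\,\omega(\rho)<\delta$.

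For each $x_0\in\O'$ I would then rescale. Putting $\mu=\max\{\|u\|_{L^\infty(B_\rho(x_0))},\,\rho\}$ and $v(y)=\mu^{-1}u(x_0+\rho y)$ for $y\in B_1$, a change of variables shows that $v$ minimizes $\F_{\tilde\A,\hat f,\hat\gamma}(\cdot\,;B_1)$ with $\tilde\A(y,s)=\A(x_0+\rho y,s)$, $\hat f(y,s)=\rho^p\mu^{1-p}f(x_0+\rho y,s)$ and $\hat\gamma(y,s)=\rho^p\mu^{-p}\gamma(x_0+\rho y,s)$; the phase structure is preserved because $v=\mu^{-1}u$ has the same sign as $u$, so $\{v>0\}=\{u>0\}$. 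The three hypotheses of Lemma~\ref{main result} are then checked directly: $\|v\|_{L^\infty(B_1)}\le1$ by the choice of $\mu$; $\|\tilde\A-\A(x_0)\|_{L^1(B_1)}=\rho^{-N}\|\A-\A(x_0)\|_{L^1(B_\rho(x_0))}\le|B_1|\,\omega(\rho)<\delta$; and, computing the critical norm, $\|\hat f\|_{L^N(B_1)}=(\rho/\mu)^{p-1}\|f\|_{L^N(B_\rho(x_0))}\le\|f\|_{L^N(\O)}$ because $\mu\ge\rho$. Lemma~\ref{main result} gives $\|v\|_{C^\alpha(B_{1/2})}\le C(\alpha)$, and scaling back yields $[u]_{C^\alpha(B_{\rho/2}(x_0))}=\mu\rho^{-\alpha}[v]_{C^\alpha(B_{1/2})}\le C(\alpha,M)$, a bound independent of $x_0$ since $\rho$ is fixed and $\mu\le M+\rho$. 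A finite covering of $\O'$ by balls $B_{\rho/2}(x_0)$ then upgrades these uniform local seminorm bounds to $\|u\|_{C^\alpha(\O')}\le C(\alpha)$, and letting $\alpha\uparrow1$ and $\O'\Subset\O$ be arbitrary gives $u\in C^{0,1^-}_{\mathrm{loc}}(\O)$.

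The delicate point I expect to be the main obstacle is arranging the normalization so that all three hypotheses of Lemma~\ref{main result} hold simultaneously with constants independent of the base point. The choice $\mu=\max\{\|u\|_{L^\infty(B_\rho(x_0))},\,\rho\}$ is what reconciles the normalization $\|v\|_{L^\infty(B_1)}\le1$ (needed to enter Lemma~\ref{far from FB 0}) with the control of the rescaled source: near the free boundary, where $u$ is small and the naive choice $\mu=\|u\|_{L^\infty(B_\rho(x_0))}$ would make $(\rho/\mu)^{p-1}$ blow up, taking $\mu=\rho$ keeps this factor bounded. This is precisely the place where the critical integrability $f\in L^N$, rather than the weaker $f\in L^q$ with $q>N/p$ used for existence, is indispensable.
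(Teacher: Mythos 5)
Your argument is correct and follows essentially the same route as the paper: rescale to the unit ball so that uniform continuity of $\A$ forces $\|\tilde\A-\A(x_0)\|_{L^1(B_1)}<\delta$, apply Lemma \ref{main result} with $\A_0=\A(x_0)$, and cover $\O'$ by the resulting balls. If anything you supply details the paper leaves implicit, namely the normalization $\mu=\max\{\|u\|_{L^\infty(B_\rho(x_0))},\rho\}$ that secures $\|v\|_{L^\infty(B_1)}\le1$ while keeping the rescaled source bounded in the scale-critical norm $L^N$.
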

\begin{proof}
The proof of this theorem follows simply by rescaling argument. Let $\O'\Subset \O$, set $d=\dist (\O^{c},\O')$ and $\O^{''}=\{x\in \O \mid \dist(x,\O')<d/2\}$. From previous lemmas, we know that $u$ is uniformly bounded in $\overline {\O^{''}}$ and moreover $\A$ is uniformly continuous in $\overline{\O^{''}}$.

Given $0<\alpha<1$, choose corresponding $\eps>0$ from the above lemmas and $\delta<d/2$ such that $|\A(x)-\A(y)|<\eps$ when $|x-y|<\delta$, ($x,y\in \O'$). Let $x_0\in \O'$ and fix $\A_0=\A(x_0)$, rescale $u_\delta(y)=u(x_0+\delta y)$ for $y\in B_1$. We can apply the Lemma \ref{main result} to $u_\delta$ and obtain $u_{\delta}\in C^{0,\alpha}(B_{1/2})$. Thus we can conclude that $u\in C^{0,\alpha}(B_{\delta/2}(x_0))$. Covering $\O'$ with balls of radius $\delta$, we can prove the theorem.
\end{proof}

\section{Asymptotic regularity estimates}\label{section4}

Now, we will use the same strategy as in the previous section and show that the regularity of a minimizer of $\F$ with $\A_{\pm}\in C(\O)$ and $f_{\pm}\in L^N(\O)$ tends asymptitically to locally Lipschitz regularity as the $L^p(\O)$ distance of $\A_+$ and $\A_-$ becomes smaller. That is, given any $0<\alpha<1$ there is a distance $\delta>0$ such that if $\|\A_+-\A_-\|_{L^1(\O)}$ is smaller than $\delta$, then $u\in C_{\mathrm{loc}}^{0,\alpha}(\O)$.

The result of the Theorem \ref{continuous medium} is the limit case when distance between $\A_+$ and $\A_-$ is zero. In fact, instead of considering $\F_0$ (defined in the \eqref{F0}, Lemma \ref{convergence}), we will consider the functional in the hypothesis of the Theorem \ref{continuous medium} as a tangential free boundary problem and recover regularity in converging solutions. Note that Lipschitz regularity for minimizer does not hold true even when $\A_+=\A_-=\mbox{constant}$ (see \cite{LTQ15}). 

We will be assuming that $\A_+$ and $\A_-$ are separately continuous in $\O$ with a modulus of continuity $\omega$

\be\label{modulus of continuity}
|\A_{\pm}(x)-\A_{\pm}(y)|\le \omega(|x-y|).
\ee
\begin{theo}
Let $u\in W^{1,p}_{\phi}(\O)$ be a minimizer of $\F$ with $\A_{\pm}$ satisfying \eqref{modulus of continuity} and $f_{\pm}\in L^N(\O)$. Then given $\O'\Subset \O$, for all $0<\alpha<1$ there exists a $\delta >0$ depending on $\alpha,\O'$ and data of the problem such that if 
\be\label{Lpconvergence}
\|\A_{+}-\A_{-}\|_{L^{1}(\O)}\le \delta
\ee
then
$$
u\in C^{0,\alpha}(\O').
$$
\end{theo}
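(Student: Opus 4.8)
The plan is to reproduce the scheme of Section~\ref{section3} almost verbatim, with one structural change: the tangential (limit) problem is no longer the constant-coefficient functional $\F_0$ of~\eqref{F0}, but the continuous-medium functional of Theorem~\ref{continuous medium}, whose minimizers already lie in $C^{0,1^-}_{\mathrm{loc}}$. Accordingly, the role played in Section~\ref{section3} by the smallness of $\|\A-\A_0\|_{L^1}$ is now played by the smallness of the jump $\|\A_+-\A_-\|_{L^1}$. As agreed at the start of Section~\ref{section3}, I rescale to the unit ball and normalise $\|u\|_{L^\infty(B_1)}=1$. The one genuinely new point, which I isolate first, is that under~\eqref{modulus of continuity} a small $L^1$ jump forces a small $L^\infty$ jump: the difference $\A_+-\A_-$ is uniformly continuous with modulus $2\omega$, so a large pointwise value would persist on a whole ball and contradict the $L^1$ bound, giving $\|\A_+-\A_-\|_{L^\infty(B_1)}\le\eta(\delta)$ with $\eta(\delta)\to0$ as $\delta\to0$. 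This matters because an $L^1$ bound on the jump is not scale invariant, whereas at every scale $\|\A_+(r_0^k\,\cdot)-\A_-(r_0^k\,\cdot)\|_{L^1(B_1)}\le c_N\|\A_+-\A_-\|_{L^\infty(B_1)}$ is controlled by the $L^\infty$ jump.

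First I would prove the analogue of Lemma~\ref{convergence}: for every $\eps>0$ there is $\delta>0$ such that $\|\A_+-\A_-\|_{L^1(B_1)}\le\delta$ forces $\|u-u_0\|_{L^\infty(B_{1/2})}\le\eps$ for some $u_0\in C^{0,1^-}(B_{1/2})$. Arguing by contradiction, take $\A_{\pm,k}$ sharing the modulus $\omega$ with $\|\A_{+,k}-\A_{-,k}\|_{L^1(B_1)}\to0$ and minimizers $u_k$ staying $\eps_0$-far from every $C^{0,1^-}$ function. Theorem~\ref{holder} gives a uniform $C^{\alpha_0}(\overline B_{1/2})$ bound, so Arzel\`a--Ascoli yields $u_k\to u_0$ uniformly and $u_k\rightharpoonup u_0$ in $W^{1,p}$; equiboundedness in $[\lambda,\Lambda]$ together with the common modulus forces (along a further subsequence) $\A_{\pm,k}\to\A_0$ locally uniformly with a single continuous limit $\A_0$, the two limits coinciding because the $L^1$ jump vanishes. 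The lower-semicontinuity and comparison computation of Lemma~\ref{convergence} (Egorov applied simultaneously to $\{u_k\}$ and $\{\A_{\pm,k}\}$) then identifies $u_0$ as a minimizer of the continuous-medium functional with $\A_+=\A_-=\A_0\in C$; by Theorem~\ref{continuous medium}, $u_0\in C^{0,1^-}_{\mathrm{loc}}(B_1)$, contradicting the separation.

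Next I would transcribe Lemmas~\ref{far from FB 0} and~\ref{far from FB}. Placing a free boundary point at the origin, the approximation lemma and the $\beta$-H\"older control of $u_0$ give $\sup_{B_{r_0}}|u|\le r_0^{\alpha}$ for suitable $r_0=r_0(\alpha)$ and $\delta$, exactly as in~\eqref{5.1}--\eqref{5.3}. The iteration then applies to $v(y)=r_0^{-k\alpha}u(r_0^ky)$: the scaling exponent keeps $\|\tilde f\|_{L^N}\le\|f\|_{L^N}$, and by the previous paragraph the rescaled jump satisfies $\|\tilde\A_+-\tilde\A_-\|_{L^1(B_1)}\le c_N\eta(\delta)$, which the initial choice of $\delta$ renders admissible at every step, so that $\sup_{B_r}|u|\le C_0 r^{\alpha}$ for $r<r_0$. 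Finally the argument of Lemma~\ref{main result} carries over unchanged: on $\{u>0\}$ and $\{u<0\}$ the Euler--Lagrange equations of Remark~\ref{rem3.4} with \cite[Theorem 4.2]{EVT11} and Campanato's theorem give interior $C^{0,\alpha}$ estimates degenerating like $d^{-\alpha}$, which combined with the non-homogeneous Moser--Harnack inequality \cite[Theorem 1]{serrin63} and the free-boundary growth above produce a H\"older bound uniform up to the free boundary. Covering $\O'$ by balls of a fixed radius $\rho$ comparable to $\dist(\O',\O^c)$ and taking $\delta\le\rho^N\delta_0$ (so the rescaled jump on each ball is admissible for the local result) then yields $u\in C^{0,\alpha}(\O')$, as in Theorem~\ref{continuous medium}.

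The main obstacle is exactly this tension between the two smallness scales under the zoom-in rescalings: an $L^1$ bound on the jump is destroyed by blow-up at a free boundary point, so a naive iteration would lose control. The resolution is the upgrade from $L^1$- to $L^\infty$-smallness afforded by~\eqref{modulus of continuity}, together with the observation that only the difference $\A_+-\A_-$, and not the individual oscillations of $\A_\pm$, needs to be small for the tangential problem to reduce to the continuous-medium one (the oscillations are absorbed, as in Theorem~\ref{continuous medium}, by the initial rescaling and the continuity of $\A_0$). The remaining difficulty is bookkeeping: selecting a single $\delta=\delta(\alpha,\O',\text{data})$ that simultaneously serves the approximation lemma, the geometric iteration, and the final covering.
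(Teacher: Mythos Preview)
Your proposal is correct and follows essentially the same approach as the paper: both take the continuous-medium functional of Theorem~\ref{continuous medium} as the tangential problem, reach it by a compactness/contradiction argument (Arzel\`a--Ascoli on the equicontinuous families $\{\A_{\pm,k}\}$ to obtain a single continuous limit $\A^*$), then iterate at free boundary points as in Lemmas~\ref{far from FB 0}--\ref{far from FB} and finish with the Moser--Harnack/Schauder argument of Lemma~\ref{main result}. Your explicit upgrade from $L^1$- to $L^\infty$-smallness of the jump via~\eqref{modulus of continuity}, and the observation that this is what makes the smallness survive the blow-up rescalings, is a point the paper's sketch passes over but which is indeed needed for the iteration to close.
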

\begin{proof}
The proof of the theorem is in the same lines as we proceeded in the previous section. We outline a very brief sketch of the proof for the reader.

For the reasons same as that discussed in the beginning of the Section \ref{section3}, we prove results for the unit ball $B_1$ centred at $0$ and assume $\|u\|_{L^{\infty}(B_1)}=1$.

Assume for a sequence $\{\A_{\pm}^k\}\in C(\O)$ satisfying \eqref{modulus of continuity} such that $\|\A_+^k-\A_-^k\|_{L^{1}(B_1)}\rightarrow 0$. Where $u_k$ is minimizer of functional $\F_k=\F_{\A_k}(\cdot;B_1)$. We have defined $\A^k(x,s)$ as 
$$
\A^k{k}(x,s)=\A^k_+(x)\chi_{\{s>0\}}+\A^k_-(x)\chi_{\{s\le 0\}}.
$$
To show that the sequence $\{u_k\}$ uniformly converges to a $C^{0,1^-}$ function in $B_{1/2}$, we argue by contradiction. Let us assume that there exists $\eps_0>0$ such that for every $w\in C^{0,1^-}(B_{1/2})$ we have $\|u_k-w\|_{L^{\infty}(B_{1/2})}>\eps_0$.

Then we can argue in the same way as in the proof of the Lemma \ref{convergence} contradict the claim. 

Since $\A_{\pm}^k$ satisfy \eqref{Lpconvergence} and \eqref{modulus of continuity}, we can apply Egorov's theorem and then Arzela Ascoli theorem to $\{\A^k\}$ in $B_{1/2}$. Thus the sequence $\{\A^k\}$ converges (upto a subsequence) uniformly to a continuous function $\A^*$ satisfying \eqref{modulus of continuity}. Then proceeding as in \eqref{principle part} and \eqref{part2}, we know that the sequence $\{u_k\}$ converge uniformly in $B_{1/2}$ to minimizer  $u^*$ of $\F^*$ defined as 
$$
\F^{*}(v)=\int_{B_1}\A^*(x)|\nabla v|^p-f(x,v)v+\lambda(x,v)\,dx .
$$
 In order to show the regularity estimates on $u$, we will be considering $\F^*$ as the tangential free boundary problem insteal of $\F_0$ as in previous section. 

From the Theorem \ref{continuous medium} we know $u^*\in C^{0,1^-}(B_{1/2})$. Hence we get a contradiction and we show that $u_k\to u^{*}\in C^{0,1^-}(B_{1/2})$ uniformly in $B_{1/2}$. 

Analogous rescaling arguments as in the proof of the Lemma \ref{far from FB} can be used to show the asymptotic $C^{0,1^-}$ estimates on $u$ in $B_{1/2}$ at points on the free boundary. 
Then we can prove the theorem using the Schauder type estimates (see \eqref{schauder}) and the non-homogenous Moser-Harnack (see \eqref{harnack}) inequality with same geometric arguments as in the Theorem \ref{main result}.
\end{proof}

\section{Finite perimeter of the free boundary}\label{section5}

In this section, we will prove that the free boundary of a minimizer of $\F$ is a set of finite perimeter. This result marks the impact of the last term $\gamma_{\pm}(x)$ on the nature of the problem. Heuristically speaking,  this term compensates the transition of phases and thus imposes some flux balance along the free boundary. This in turn forces the free boundary to gain some regularity. 

The technique we will be adapting is from geometric measure theory. One can refer to \cite{db12}. Also refer to \cite{ButHar18} to see an application of the same technique in a shape optimization problem. The main idea is very simple, the information related to the perimeter of the free boundary is present in the integral $\int_{|u|>0}|\nabla u|\,dx$, and the link between this integral and the perimeter is expressed through the co-area formulae. 

We will be assuming the Dirichlet boundary condition and that the terms $\gamma_{\pm}$ are strictly ordered. That is 
\be\label{ordering}
0<c<\gamma_{+}(x)-\gamma_-(x).
\ee

\begin{theo}\label{finite perimeter}
Given a minimizer $u$ of $\F$ with $\gamma$ satisfying \eqref{ordering} and $\phi=0$ on $\partial \O$, the reduced boundary $\partial^* \{u>0\}$ is a set of finite perimeter.
\end{theo}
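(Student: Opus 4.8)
The plan is to prove the equivalent (and intended) statement that the super-level set $\{u>0\}$ has finite perimeter in $\O$; De Giorgi's structure theorem then gives $\H^{N-1}(\partial^*\{u>0\})=\per(\{u>0\};\O)<\infty$. The whole argument rests on two ingredients: a one-parameter family of competitors that uses the strict ordering \eqref{ordering} to make shrinking the positive phase energetically profitable, and the co-area formula, which converts the resulting gradient bound into a perimeter bound. The compensation term is what makes this work, exactly as advertised heuristically before the statement.

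First I would fix a small $s>0$ and introduce the competitor $v_s=u-\min(u_+,s)$, where $u_+=\max(u,0)$. Explicitly $v_s=u$ on $\{u\le 0\}$, $v_s=0$ on $\{0<u\le s\}$, and $v_s=u-s$ on $\{u>s\}$; shifting the \emph{whole} positive part down by $s$ (rather than merely truncating it) is precisely what keeps $v_s$ continuous and hence in $W^{1,p}(\O)$. Because $\phi=0$ forces the trace of $u_+$ to vanish, one has $v_s-u=-\min(u_+,s)\in W_0^{1,p}(\O)$, so $v_s$ is admissible. Off the strip $\{0<u<s\}$ we have $\nabla v_s=\nabla u$ and the phase of $v_s$ agrees with that of $u$, so all three terms of $\F$ cancel; inside the strip $\nabla v_s=0$ and the phase switches from positive to negative. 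Feeding this into minimality $\F(u)\le\F(v_s)$, I expect
\[
\int_{\{0<u<s\}}\A_+|\nabla u|^p\,dx+\int_{\{0<u<s\}}(\gamma_+-\gamma_-)\,dx\le\int_{\O}\big(f(x,u)u-f(x,v_s)v_s\big)\,dx\le Cs,
\]
the last bound holding because $f\in L^q(\O)\subset L^1(\O)$ and $|u-v_s|\le s$. By \eqref{ellipticity} and \eqref{ordering} both terms on the left are nonnegative, so I simultaneously read off $\int_{\{0<u<s\}}|\nabla u|^p\,dx\le Cs$ and $|\{0<u<s\}|\le Cs$.

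Next I would upgrade the $L^p$-gradient bound to an $L^1$-gradient bound by Hölder's inequality with exponents $p$ and $p'=p/(p-1)$, using the volume estimate just obtained:
\[
\int_{\{0<u<s\}}|\nabla u|\,dx\le\Big(\int_{\{0<u<s\}}|\nabla u|^p\,dx\Big)^{1/p}\,|\{0<u<s\}|^{1/p'}\le (Cs)^{1/p}(Cs)^{1/p'}=Cs.
\]
By the co-area formula this says $\int_0^s\per(\{u>\tau\};\O)\,d\tau=\int_{\{0<u<s\}}|\nabla u|\,dx\le Cs$, so the mean-value principle furnishes, for each $s$, a level $\tau_s\in(0,s)$ with $\per(\{u>\tau_s\};\O)\le C$. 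Letting $s\to 0$ along a sequence yields levels $\tau_j\to 0$ with uniformly bounded perimeter; since $|\{0<u<\tau_j\}|\to 0$ we have $\chi_{\{u>\tau_j\}}\to\chi_{\{u>0\}}$ in $L^1(\O)$, and lower semicontinuity of the perimeter gives $\per(\{u>0\};\O)\le\liminf_j\per(\{u>\tau_j\};\O)<\infty$, which is the claim.

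The main obstacle I anticipate is the energy-comparison bookkeeping of the middle paragraph: one must check carefully that $\nabla v_s=\nabla u$ a.e. off the strip, that $\nabla u=0$ a.e. on the level set $\{u=s\}$ (so that set is negligible), and that the source contribution truly is $O(s)$, so that the favorable sign coming from \eqref{ordering} survives and is not absorbed by the lower-order terms. The hypothesis $\phi=0$ enters exactly here, making $v_s$ admissible with no boundary correction; dropping it (as in Remark \ref{general boundary}) forces a localized competitor and a boundary-layer analysis, which is the source of the tedious calculations the author chose to omit. By contrast, the Hölder step, the co-area identity, and the semicontinuity passage are entirely standard.
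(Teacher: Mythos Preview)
Your proof is correct and follows essentially the same route as the paper: a competitor that flattens the thin positive strip, the ordering \eqref{ordering} to extract simultaneous $L^p$-gradient and volume bounds on $\{0<u<s\}$, H\"older to get the $L^1$-gradient bound, then co-area and lower semicontinuity of perimeter. The only cosmetic difference is that the paper's competitor $u_\eps=(u-\eps)^+-(u+\eps)^-$ also shifts the negative part, whereas your $v_s=u-\min(u_+,s)$ leaves it untouched; since only the positive strip is needed for the $\gamma$-gain, your choice is slightly more economical but otherwise identical in spirit.
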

\begin{proof}
For $\eps>0$ define the following
$$
u_{\eps}=(u-\eps)^+-(u+\eps)^-\mbox{  and   } A_{\eps}=\{0<u\le \eps\}\cap \O.
$$
Note that
$$u_\eps=\begin{cases}
u-\eps&\hbox{if }u>\eps\\
u+\eps&\hbox{if }u<-\eps\\
0&\hbox{if }|u|\le\eps
\end{cases}
\qquad\hbox{and}\qquad
\nabla u_\eps=\begin{cases}
\nabla u&\hbox{a.e. on }\{|u|>\eps\}\\
0&\hbox{a.e. on }\{|u|\le\eps\}.
\end{cases}$$

From the minimility of $u$ for the functional $\F$ we have
$$
\F(u)\le \F(u_{\eps})
$$
therefore
\[
\begin{split}
\int_{\O}\A(x,u)|\nabla u|^p-\A(x,u_{\eps})|\nabla u_{\eps}|^p\,dx +\int_{\O} \gamma(x,u)-\gamma(x,u_{\eps})\,dx & \le C(f,\O,N)\eps\\
\Rightarrow \int_{\{-\eps<|u|\le \eps \}}\A(x,u)|\nabla u|^p\,dx +\int_{\{0<u\le \eps\}}(\gamma_+(x)-\gamma_-(x))\,dx&\le C(f,\O,N)\eps.
\end{split}
\]

Using the hypothesis \eqref{ellipticity} and \eqref{ordering} in above inequation, we obtain the following
$$
\int_{A_{\eps}}\lambda |\nabla u|^p \,dx +c |A_{\eps}|\,dx\le C(f,\O,N)\eps
$$

this implies that $\int_{A_{\eps}}|\nabla u|^p\,dx \le C\eps$ and $|A_{\eps}|\le C\eps$, from the H\"older inequality, we have
$$
\int_{\{0<u<\eps\}}|\nabla u|\,dx \le \int_{A_{\eps}}|\nabla u|\,dx\le \left ( \int_{A_{\eps}}|\nabla u |^p\,dx\right )^{1/p}|A_{\eps}|^{1/p'}\le C\eps
$$
where $C=C(f,\lambda,c,\O,N)$.
Now, we use the coarea formula and we deduce
$$\int_0^\eps\HH^{N-1}\big(\partial^*\{u>t\}\big)\,dt\le C\eps.$$
Thus there exists a sequence $\delta_n\to0$ such that
$$\HH^{N-1}\big(\partial^*\{u>\delta_n\}\big)\le C\qquad\hbox{for every }n.$$
Since $\{u>\delta_n\}\to \{u>0\}$ in $L^1(\O)$ and perimeter is lower semicontinuous with respect to $L^1-$convergence of sets (see \cite{AFP00}), we finally imply that
$$\HH^{N-1}\big(\partial^*\{u>0\}\big)\le C$$
as required.
\end{proof}

\begin{rema}
After adding an assumption that $\gamma(x,0)=0, \gamma(x,s)>c>0\;(s\neq 0)$ and following the lines of the proof of the Theorem \ref{finite perimeter}, one can show that the reduced boundary $\partial ^*\{u<0\}$ is also a set of finite perimeter. In this case, the ordering condition \eqref{ordering} in the hypothesis of the Theorem \ref{finite perimeter} can be dropped.
\end{rema}

\begin{rema}\label{general boundary}
We can prove a local version of the Theorem \ref{finite perimeter} for a general boundary condition, using the same ideas. For this, we consider a ball $B_r$, such that $B_{2r}$ is contained inside the domain $\O$ and we define the test function $\tilde{u_\eps}$ as
$$
\tilde {u_{\eps}}  =\eta u+ (1-\eta)u_{\eps}
$$
where $u_{\eps}$ is defined same as in the proof of Theorem \ref{finite perimeter} and $\eta$ is a smooth function such that 
$$
\eta(x)= \begin{cases}
0 \;\; \mbox{if $x\in B_r$}\\
1 \;\; \mbox{if $x\in \O\setminus B_{2r}$}.
\end{cases}
$$
If we make use of $\tilde{u_{\eps}}$ instead of $u_{\eps}$ as a test function in the proof of the Theorem \ref{finite perimeter}, we can show that the reduced free boundary is a set of finite perimeter inside the ball $B_r$.  More elaborate discussion can be found in the Section 5.2 of \cite{velichkovnotes}, for the case of one phase free boundary problem.

\end{rema}


\begin{thebibliography}{20}

\bibitem{AF94}
{\sc E.~Acerbi and N.~Fusco}, {\em A transmission problem in the calculus of
  variations}, Calc. Var. Partial Differential Equations, 2
  (1994), pp.~1--16.

\bibitem{altcaf81}
{\sc H.~Alt and L.~Caffarelli}, {\em Existence and regularity for a minimum
  problem with free boundary.}, J. Reine Angew. Math, 325 (1981), pp.~105--144.

\bibitem{ACF84}
{\sc H.~Alt, L.~Caffarelli, and A.~Friedman}, {\em Variational problems with
  two phases and their free boundaries}, Trans. Amer. Math. Soc., 282 (1984), pp.~431--431.

\bibitem{TA15}
{\sc M.~D. Amaral and E.~Teixeira}, {\em Free transmission problems},
  Comm. Math. Phys., 337 (2015), pp.~1465--1489.

\bibitem{AFP00}
{\sc L.~Ambrosio, N.~Fusco, and D.~Pallara}, {\em Functions of Bounded
  Variation and Free Discontinuity Problems}, Oxford mathematical monographs,
  Clarendon Press, 2000.

\bibitem{ab02}
{\sc A.~Braides}, {\em Gamma-convergence for Beginners}, Oxford Lecture Series
  in Mathematics and Its Applications, Clarendon Press, 2002.

\bibitem{HB10}
{\sc H.~Brezis}, {\em Functional Analysis, Sobolev Spaces and Partial
  Differential Equations}, Universitext, Springer New York, 2010.

\bibitem{db12}
{\sc D.~Bucur}, {\em Minimization of the k-th eigenvalue of the {D}irichlet
  {L}aplacian}, Arch. Ration. Mech. Anal., 206 (2012),
  pp.~1073--1083.

\bibitem{ButHar18}
{\sc G.~Buttazzo and H.~Shrivastava}, {\em Optimal shapes for general integral
  functionals}, Ann. H. Lebesgue,  (to appear).

\bibitem{JDS18}
{\sc J.~da~Silva and J.~Rossi}, {\em A limit case in non-isotropic two-phase
  minimization problems driven by p-laplacians}, Interfaces Free Bound., 20 (2018), p.~379–406.

\bibitem{dp05}
{\sc D.~Danielli and A.~Petrosyan}, {\em A minimum problem with free boundary
  for a degenerate quasilinear operator}, Calc. Var. Partial Differential Equations, 23 (2005), pp.~97--124.

\bibitem{dg68}
{\sc E.~De~Giorgi}, {\em Teoremi di semicontinuit{\`a} nel calcolo delle
  variazioni: (lezioni tenute nell'anno accademico 1968-69)}, Istituto
  nazionale di alta matematica, 1969.

\bibitem{evans}
{\sc L.~C. Evans}, {\em Partial differential equations}, vol.~19 of Graduate
  studies in mathematics, American Mathematical Society, 2010.

\bibitem{GE15}
{\sc L.~C. Evans and R.~F. Gariepy}, {\em Measure Theory and Fine Properties of
  Functions, Revised Edition}, Textbooks in Mathematics, CRC Press, 2015.

\bibitem{HF69}
{\sc H.~Federer}, {\em Geometric Measure Theory}, Classics in Mathematics,
  Springer-Verlag, New York, 1969.

\bibitem{FKR17}
{\sc A.~Figalli, B.~Krummel, and X.~Ros-Oton}, {\em On the regularity of the
  free boundary in the $p-$laplacian obstacle problem}, J. Differential
  Equations, 263 (2017), pp.~1931--1954.

\bibitem{FGS17}
{\sc M.~Focardi, F.~Geraci, and E.~Spadaro}, {\em The classical obstacle
  problem for nonlinear variational energies}, Nonlinear Anal., 154 (2017), pp.~71--87.

\bibitem{GBF99}
{\sc G.~B. Folland}, {\em Real analysis: Modern techniques and their
  applications}, Pure and applied mathematics, John Wiley and Sons, 2013.

\bibitem{gigi84}
{\sc M.~Giaquinta and E.~Giusti}, {\em Quasi-minima}, Ann. Inst. H. Poincar\'e Anal. Non Lin\'eaire, 1 (1984), pp.~79--107.

\bibitem{eg05}
{\sc E.~Giusti}, {\em Direct methods in the calculus of variations}, World
  Scientific, 2003.

\bibitem{KLS17}
{\sc S.~Kin, K.~A. Lee, and H.~Shahgholian}, {\em An elliptic free boundary
  arising from the jump of conductivity}, Nonlinear Anal., 161 (2017), pp.~1--29.

\bibitem{PK08}
{\sc S.~G. Krantz and H.~R. Parks}, {\em Geometric Integration Theory},
  Birkh\"auser Advanced Texts Basler Lehrb\"ucher, Springer Science and
  Business Media, 2008.

\bibitem{LTQ15}
{\sc R.~Leit\~ao, O.~S. de~Queiroz, and E.~Teixeira}, {\em Regularity for
  degenerate two-phase free boundary problems}, Ann. Inst. H. Poincar\'e Anal. Non Lin\'eaire, 32 (2015), pp.~741--762.

\bibitem{fmgmt}
{\sc F.~Maggi}, {\em Sets of finite perimeter and geometric variational
  problems}, vol.~135 of Cambridge Studies in Advanced Mathematics, Cambridge
  University Press, 2012.

\bibitem{WR87}
{\sc W.~Rudin}, {\em Real and Complex Analysis}, McGraw-Hill series in Higher
  Mathematics, McGraw-Hill Education, 2~ed., 1978.

\bibitem{serrin63}
{\sc J.~Serrin}, {\em A harnack inequality for nonlinear equations}, Bull. Amer. Math. Soc. (N.S.), 69 (1963), pp.~481--486.

\bibitem{EVT11}
{\sc E.~Teixeira}, {\em Sharp regularity for general poisson equations with
  borderline sources}, J. Math. Pures Appl., 99
  (2011), pp.~150--164.

\bibitem{TE11}
{\sc E.~Teixeira}, {\em Universal moduli of
  continuity for solutions to fully nonlinear elliptic equations}, Arch. Ration. Mech. Anal., 211 (2011), pp.~911--927.

\bibitem{TE12}
{\sc E.~Teixeira}, {\em Regularity for
  quasilinear equations on degenerate singular sets}, Math. Ann.,
  358 (2014), pp.~241--256.

\bibitem{TU13}
{\sc E.~Teixeira and J.~Urbano}, {\em A geometric tangential approach to sharp
  regularity for degenerate evolution equations}, Anal. PDE, 7 (2014),
  pp.~733--744.
  
  \bibitem{velichkovnotes}
  {\sc B.~Velichkov}, {\em Regularity of the one-phase free boundaries}, (Lecture notes).

\end{thebibliography}
\end{document}